\numberwithin{equation}{section}
\definecolor{purple}{rgb}{0.9,0,0.8}
\definecolor{gray}{rgb}{0.7,0.7,0.7}
\newtheorem{thm}{Theorem}[section]
\newtheorem{lem}[thm]{Lemma}
\theoremstyle{definition}
\newtheorem{remark}[thm]{Remark}
\newcommand{\beq}{\begin{equation}}
\newcommand{\eeq}{\end{equation}}
\newcommand{\E}{\mathbb{E}}
	\renewcommand{\P}{\mathbb{P}}
\newcommand{\R}{\mathbb{R}}
\def\R{\mathbb{R}}
\begin{document}

\begin{frontmatter}
\title{Large Deviation Principle for the Whittaker 2d Growth Model}
\runtitle{Large Deviation Principle}

\begin{aug}
\author[A]{\fnms{Jun} \snm{Gao}\ead[label=e1]{jung2@stanford.edu}},
\author[B]{\fnms{Jie} \snm{Ding}\ead[label=e2]{dingj@umn.edu}}
\address[A]{Department of Mathematics, Stanford University, 
\printead{e1}}

\address[B]{School of Statistics, University of Minnesota 
\printead{e2}}
\end{aug}


\begin{abstract}
The Whittaker 2d growth model is a triangular continuous Markov diffusion process that appears in many scientific contexts. It has been theoretically intriguing to establish a large deviation principle for this 2d process with a scaling factor. The main challenge is the spatiotemporal interactions and dynamics that may depend on potential sample-path intersections. We develop such a principle with a novel rate function. Our approach is mainly based on Schider's Theorem, contraction principle, and special treatment for intersecting sample paths.
\end{abstract}

\begin{keyword}
\kwd{Large deviation principle}
\kwd{Markov diffusion process}
\kwd{Whittaker 2d growth model}

\end{keyword}

\end{frontmatter}

\section{Introduction}


The Whittaker 2d growth model, denoted by
$$ T( t ) =\{T_{k,j}(t)\}_{1 \leq j \leq k \leq N} $$
and valued in $\R^{N(N+1)/2}$,
is temporally a continuous Markov diffusion process  and spatially a triagular array of particles.
In particular, it is defined by the following system of stochastic differential equations (SDEs).
\begin{align*}
&\hspace{-2cm}\textrm{ For } k=1: \\
dT_{1,1}&=dW_{1,1}+ a_1dt, \\
&\hspace{-2cm}\textrm{ For } k=2, \ldots , N: \\
dT_{k,1}&= dW_{k,1}+(a_k+ e^{T_{k-1, 1}- T_{k,1}})dt, \\
dT_{k,2}&= dW_{k,2}+(a_k+e^{T_{k-1,2}-T_{k,2}}-e^{T_{k,2}-T_{k-1,1}}) dt,  \\
&\, \cdots  \\
dT_{k,k-1}&= dW_{k,k-1}+(a_k+e^{T_{k-1,k-1}-T_{k,k-1}}-e^{T_{k,k-1}-T_{k-1,k-2}})dt, \\
dT_{k,k}&=dW_{k,k}+(a_k-e^{T_{k,k}-T_{k-1,k-1}})dt,
\end{align*}
where $\{W_{n,k}; 1 \leq k \leq n \leq N\}$ are independent Brownian motions.
In this work, we will only consider the non-drift case: $a_k = 0 $ for all $k$. %

The Whittaker 2d growth model can be described by a diffusion term of Brownian motion and a drift term of interaction with at most two particles on the layer above. The drift terms of interactions are one-directional: meaning the $k-1$ level can influence the $k$ level but not vice versa. Intuitively, the particles above are much ``heavier'' than those below.

The Whittaker 2d growth model is an important model to study as it is closely connected with several other fundamental stochastic processes.
The Whittaker 2d growth model was originally introduced in \cite{O} as a symmetric version of a stochastic differential system closely connected to the quantum toda lattice.  
Moreover, both the joint distribution and the infinitesimal generator of the models can be formulated via Whittaker functions, which are the eigenvalues of the Hamiltonian of a quantum toda lattice  
\cite{PS}.
Also, the Whittaker 2d growth model is the scaling limit of the q-Whittaker 2d growth model, which is a continuous-time and discrete-space stochastic process 
constructed from Macdonald symmetric functions~\cite{BC}.

 
\textbf{Contribution}.
The Whittaker 2d growth model is a hierarchical stochastic differential equation system that involves highly nontrivial interactions of particles. The standard large deviation principle (LDP) for independent Brownian motions cannot apply. This is mainly due to the technical challenge caused by possible intersection of sample paths, where the Schider's Theorem cannot apply. 
To address this, we need to apply sophisticated analysis and constructions of the rate function by approximated processes.
Also, we adopt a recursive proof to address the challenging effect of multi-layer interactions.
We derive the LDP local estimates in a hierarchical manner and complete the full LDP by showing the exponential tightness.

\textbf{Outline}.
The organization of the paper is given below.
In Section~\ref{sec_main}, we introduce the main result regarding the LDP of scaled Whittaker 2d growth models. 
In Section~\ref{sec_proof}, we provide proof of the main results. 
In particular, Subsection~\ref{sec_local_ldp} addresses the issue related to the boundary terms of the stochastic differential equation system. 
In Subsection~\ref{almost_interlaced}, we prove that particles are almost interlaced, and thus we can approximate the particles with processes found in local LDP. Based on the above development, we analyze a special four-particles system and then extend the result to general cases.
Subsection~\ref{main_theorem} proves the large deviation principle for the most general case and the good rate function.
In Appendix~\ref{sec_example}, we elaborate on two special cases of the theorem: the crossing case and strict interlacing case.
In Appendix~\ref{sec_existence}, we show the existence and uniqueness of the strong solution to the Whittaker 2d growth model.

\section{Main Result: LDP for Whittaker 2d growth model} \label{sec_main}
\label{section2}
For the Whittaker 2d growth model, we replace
$ T_{k,j}(t) $
by
$ \frac{1}{\gamma}T_{k,j}(\gamma t) $,
where $\gamma$ is a positive scaling parameter.
The SDE system becomes:

$$ dT_{1,1}=\frac{1}{\sqrt {\gamma}}dW_{1,1}$$

For $k=2, 3. \dots , N$:

\begin{align*}
dT_{k,1}&=\frac{1}{\sqrt {\gamma}} dW_{k,1}+ e^{\gamma({T_{k-1, 1}- T_{k,1}})}dt, \\
dT_{k,2}&= \frac{1}{\sqrt {\gamma}}dW_{k,2}+(e^{ \gamma (T_{k-1,2}-T_{k,2} )}- e^{ \gamma (T_{k.2}-T_{k-1,1})})dt, \\
\dots, \notag \\
dT_{k,k-1}&= \frac{1}{\sqrt {\gamma}}dW_{k,k-1}+(e^{\gamma (T_{k-1,k-1}-T_{k,k-1})}-e^{ \gamma (T_{k.k-1}-T_{k-1,k-2})})dt, \\
dT_{k,k}&= \frac{1}{\sqrt {\gamma}}dW_{k,k}-e^{\gamma (  T_{k,k}-T_{k-1,k-1} ) } dt.
\end{align*}
where $ \{W_{k,j} \}_{1 \leq j \leq k \leq N}$ is a set of independent standard Brownian motion,
with the fixed starting positions
\begin{align}
T_{k+1,j+1}(0) \leq T_{k,j}(0) \leq T_{k+1,j}(0), 1 \leq j \leq k \leq N.\label{eq_init}
\end{align}
We make the above assumption on initial positions so that the process does not diverge. 
In line with the above condition, we will study the sample paths (functions of time $t \in [0,1]$) satisfying
\begin{equation}
\phi_{k+1,j+1} \leq \phi_{k,j} \leq \phi_{k+1,j}, 1 \leq j \leq k \leq N.
\label{eq_sample_paths}
\end{equation}
in the study of LDP.

The following theorem introduces the large deviation principles for the scaled Whittaker 2d growth model and the rate function.

\begin{thm}[Main Theorem]
\label{mainthm}
Assume that the particles' initial condition (\ref{eq_init}) and sample path condition (\ref{eq_sample_paths}) hold.
The Whittaker 2d growth model satisfies LDP in $C_0([0,1])$ with the good rate function
\begin{equation}
I(\phi) = \sum_{1 \leq k \leq n \leq N} I_{n,k}(\phi_{n,k}),
\end{equation}
\begin{eqnarray*}
I_{n,k}(\phi_{n,k}) = \left\{ \begin{array}{ll}
\sum_{1 \leq k \leq n \leq N} I_{n,k}(\phi_{n,k})
& \quad \textrm{ if  $\phi_{k+1,j+1} \leq \phi_{k,j} \leq \phi_{k+1,j}, 1 \leq j \leq k \leq N.$ }  \\
+\infty & \quad\textrm{ otherwise.}
\end{array}
\right.
\end{eqnarray*}
where $ \phi = \{ \phi_{n,k} \in C_0[0,1]; 1 \leq k \leq n \leq N \},$
$C_0([0,1])$ is the space of continuous functions on the interval $[0,1]$ with the $\mathcal{L}_\infty$ norm,
and


\begin{eqnarray*}
I_{n,k}\{\phi_{n,k}) =
\left\{ \begin{array}{ll}
\biggl\{\frac 1 2 \int_{\phi_{n-1,k-1} > \phi_{n,k}>\phi_{n-1,k}}\dot{\phi}_{n,k}^2 ds  \\
\quad+ \frac 1 2 \int_{\phi_{n-1,k-1}= \phi_{n,k}> \phi_{n-1,k}}(\dot{\phi}_{n,k})_-^2  \nonumber \\
\quad+\frac 1 2 \int_{\phi_{n-1,k-1} > \phi_{n,k} = \phi_{n-1,k}}(\dot{\phi}_{n,k})_+^2\biggr\}
& \quad \textrm{ if $\phi_{n,k} \in \mathcal{AC}$ and $\phi_{n,k}(0) = T_{n,k}(0),$}  \\
+\infty & \quad\textrm{ otherwise.}
\end{array}
\right.
\end{eqnarray*}
Here, we define $(x)_+ = max(x,0),(x)_- = max(-x,0) $, and $\mathcal{AC}$ denotes the set of absolutely continuous functions on $[0,1].$

\end{thm}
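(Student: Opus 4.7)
The plan is to prove the LDP by induction on the level index $n$, exploiting the one-directional hierarchical structure: level $n-1$ influences level $n$ but not vice versa. The base case $n=1$ is immediate, since $T_{1,1} = \gamma^{-1/2} W_{1,1}$ and Schilder's Theorem gives LDP in $C_0([0,1])$ with rate $\tfrac12 \int_0^1 \dot\phi^2\,ds$. For the induction step, conditional on having fixed sample paths $\{\phi_{n-1,k}\}$ for the previous level (with the appropriate interlacing condition), I would derive LDP for the level-$n$ particles $T_{n,1},\ldots,T_{n,n}$, and then combine the levels by independence of the driving Brownian motions together with a suitable contraction/continuity argument. The bulk of the work is to correctly identify the rate function at each particle, given the singular nature of the drift $e^{\gamma(\cdot)}$ near sample-path contact.

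\textbf{Local LDP per particle.} For a single level-$n$ particle, say $T_{n,k}$, the SDE takes the form $dT_{n,k} = \gamma^{-1/2}\,dW_{n,k} + (e^{\gamma(\phi_{n-1,k-1} - T_{n,k})} - e^{\gamma(T_{n,k} - \phi_{n-1,k})})\,dt$ once the level above is frozen at deterministic trajectories. The heuristic is that as $\gamma \to \infty$, the drift vanishes in the strictly interlaced region $\{\phi_{n-1,k-1} > T_{n,k} > \phi_{n-1,k}\}$, and acts as an infinite repulsive barrier on the boundary. I would therefore first establish a local LDP in a neighborhood of the strict interlacing region using Schilder plus the standard exponential change of measure (Girsanov-type arguments), producing the term $\tfrac12\int \dot\phi_{n,k}^2\,ds$ on $\{\phi_{n-1,k-1} > \phi_{n,k} > \phi_{n-1,k}\}$. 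On the contact set $\{\phi_{n,k} = \phi_{n-1,k-1}\}$, the repulsion from above forces $\dot\phi_{n,k} \geq 0$ to contribute to the cost — moving downward against the barrier requires the Brownian fluctuation to overcome the exponential, which produces the one-sided penalty $\tfrac12(\dot\phi_{n,k})_-^2$; symmetrically for the contact with $\phi_{n-1,k}$ from below.

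\textbf{From local to global; intersection handling.} The "almost interlaced" subsection referenced in the outline is the key bridge: I would show that with probability exponentially close to $1$, the scaled particles remain in an $\varepsilon$-neighborhood of the interlacing region, so the SDE can be approximated pathwise by a reflected version whose LDP is tractable. Concretely, for each $\varepsilon > 0$ consider the truncated drift $(e^{\gamma(\cdot)} \wedge M)$ and analyze the associated SDE as a continuous functional of the driving Brownian motion; the contraction principle then delivers an LDP with a truncated rate function. Passing $M\to\infty$ and $\varepsilon \to 0$ identifies the limit as the stated piecewise rate. To address intersections, I would analyze the specially enumerated four-particle toy system mentioned in the outline (two consecutive levels, two particles each), so that all the boundary-contact and crossing scenarios can be studied in isolation before being combined into the general rate function by additivity over the tree of interactions.

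\textbf{Upper bound and exponential tightness.} For the full LDP I need the upper bound on closed sets, which requires exponential tightness. I would establish this by a uniform modulus of continuity estimate: since each $T_{n,k}$ is a Brownian motion plus a drift controlled (at each time) by differences between layers that are themselves exponentially tight inductively, Arzel\`a--Ascoli together with a union bound over levels yields compactness at exponential scale. The rate function is then verified to be good (lower semicontinuous with compact sublevel sets) by observing that $I_{n,k}$ is a sum of lower semicontinuous functionals on $\mathcal{AC}$ restricted by the interlacing constraint, and that the constraint set is closed in the uniform topology. \textbf{The hard part} will be rigorously justifying the one-sided quadratic penalties $(\dot\phi_{n,k})_\pm^2$ on the contact sets: the exponential drift is not a continuous functional of the path in any topology where Schilder holds, so the contraction principle cannot be applied directly, and one must carefully analyze how much Brownian cost is required to force sustained contact with the barrier while moving in the "wrong" direction. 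This analysis relies on sharp large-deviation estimates for reflected diffusions, implemented by the smooth truncation and limit procedure described above.
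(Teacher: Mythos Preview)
Your overall architecture matches the paper's closely: induction on levels, a local analysis per particle with the level above frozen, an ``almost interlaced'' lemma to control the drift, and a four-particle prototype before the general case. Where you diverge is precisely at the point you flag as hard, and the paper's mechanism there is quite different from what you propose.

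You suggest handling the one-sided penalties via Girsanov plus a truncated drift $e^{\gamma(\cdot)}\wedge M$ and sharp reflected-diffusion estimates. The paper does none of this. Instead, once the far barrier is strictly detached, it drops that exponential term by a direct comparison argument (exponential equivalence), leaving a \emph{one-sided} SDE $d\widetilde T_0 = \gamma^{-1/2}dW + e^{\gamma(\phi^{(-)}-\widetilde T_0)}dt$. This equation is \emph{explicitly solvable}: $\widetilde T_0(t) = c + \gamma^{-1/2}W(t) + \gamma^{-1}\log\bigl(1+\gamma\int_a^t e^{\gamma(\phi^{(-)}-\gamma^{-1/2}W)}\bigr)$. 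A Laplace-principle calculation then shows this is exponentially equivalent to $\widetilde T(t) = c + \gamma^{-1/2}W(t) + \sup_{[a,t]}(\phi^{(-)} - \gamma^{-1/2}W - c)_+$, which is a \emph{continuous} functional of $W$. Now Schilder plus the contraction principle apply directly, and the variational problem over $\psi$ with the running-sup constraint is what produces the one-sided integrand $(\dot\phi)_-^2$ on the contact set. No Girsanov, no truncation, no reflected-diffusion LDP needed.

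Your Girsanov route is risky here: the Radon--Nikodym density involves $\int e^{2\gamma(\cdot)}dt$, which blows up super-exponentially on contact sets, so the usual exponential-martingale bounds do not close. The truncation $\wedge M$ with $M$ fixed independently of $\gamma$ also seems delicate, since the drift is only truncated when the gap exceeds $\gamma^{-1}\log M$, and the order of limits $M\to\infty$ versus $\gamma\to\infty$ is exactly where the one-sided cost lives. The paper sidesteps all of this via the explicit solution. One further structural point you omit: in the four-particle step the paper partitions $[0,1]$ into the closed set $P=\{\phi_+-\eta\le\phi\le\phi_-+\eta\}$ and its complementary open intervals $I_n$; on $P$ the almost-interlacing lemma forces $|T-\phi|<\delta$ for free, and the local LDP is applied only on each $I_n$ where exactly one barrier is active. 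This decomposition is what lets the one-sided lemmas be glued into the global rate.
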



\begin{remark}
If there is no $t$ such that $\phi_{n-1,k-1}(t) = \phi_{n,k}(t)$ and $\phi_{n,k}(t) = \phi_{n-1,k}(t)$, which means $\phi_{n-1,k-1}< \phi_{n,k}$ and $\phi_{n,k} < \phi_{n-1,k}$. Then the rate function degenerates to following: 
\begin{eqnarray*}
I(\phi) = 
\frac{1}{2}
\left\{ \begin{array}{ll}
\int_0^1 \dot \phi ^ 2 ds, & \quad \textrm{{if $\phi \in \mathcal{AC}$ and $\phi(0) = T(0)$}} \\
+\infty & \quad \textrm{ otherwise,}
\end{array}
\right.
\end{eqnarray*}
which is the rate function of a single scaled Brownian motion $\frac{1}{\sqrt{\gamma}}W(t).$
So we can regard this result as an extention of a similar rate function for a single particle, except for the areas where sample paths intersect.
\end{remark}

\begin{remark}

When $\phi_{n,k}$ and $\phi_{n,k-1}$ intersect, meaning that there is $t$ such that $\phi_{n,k}(t) =\phi_{n,k-1}(t),$ the probability of two particles staying within these two sample paths' neighbourhood should be higher than that in the Brownian motion case. That is why in the part $\phi_{n,k}(t) =\phi_{n,k-1}(t)$, the rate function is different from that of a Brownian motion. 

We also made assumptions on initial positions, in the same order that we require the sample paths. Otherwise the sample paths will be crossing and then the rate function is $\infty$, as proved in Appendix \ref{existence}.
 We consider the space of absolutely continuous functions $\mathcal{AC}$, since the sample paths in $\mathcal{AC}$ are differentiable almost everywhere and the rate function is well-defined.

\end{remark}



\section{Proof of Main Theorem} \label{sec_proof}


We first introduce a sketch of the proof.
In Subsection 3.1 (``local LDP''), we first prove for a single particle on a given sub-interval of $[0,1].$
To simplify the notation, we consider the following generic form of a single $\{T_{n,k}\}$,
$$dT = dW + ( e ^ {T^{(-)} -  T }  - e^{ T - T^{(+)}} ) dt,$$
and let $\phi, \phi^{(+)},\phi^{(-)} $ denote the sample paths of $T, T^{(+)},T^{(-)} $, respectively. 
We start with the simplest case of one sample path, say $\phi$, and show that $\phi^{(+)}$ is strictly detached from $\phi,$ and $\phi^{(-)}$ coincides with $\phi$ for some part of the time interval. 

In Subsection 3.2 (``almost interlaced''), we prove the particles are interlaced with a high probability given the initial conditions. This part of proof enables us to apply the results in Subsection 3.1 to Subsection 3.3. 

In Subsection 3.3 (``general case''), we complete the main proof. 
We start from the special case of a four-particle system. We segment the interval into small intervals and apply our results from Subsection 3.1. Then we construct the lower and upper bounds of the rate function. By an appropriate choice of independent parameters in the bounds, we obtain the exact rate functions.
Finally, we extend our analysis to the general system with similar proofs. 

\subsection{Local LDP}
\label{sec_local_ldp}
Suppose that $a,b$ are constants that satisfy $0 \leq a < b \leq 1$. 
We are interested in the case:
\begin{equation}
\label{local_ldp_eq}
dT = \frac 1 {\sqrt \gamma} dW + (e^{\gamma(\phi^{(-)} - T)} - e^{\gamma (T - \phi^{(+)})})dt,  
\end{equation}
on $[a,b],$ with fixed starting positions that satisfy:
$$\phi^{(-)}(a) \leq T(a) < \phi^{(+)}(a). $$

We are interested in the convergence rate at $T=\phi$,
where $\phi^{(-)} \leq \phi < \phi^{(+)} \; on [a,b]. $

\begin{lem}
\label{lem3.1}
Recall the definition of $T$,
$$dT = \frac 1 {\sqrt \gamma} dW + (e^{\gamma(\phi^{(-)} - T)} - e^{\gamma (T - \phi^{(+)})})dt.$$
We define
\begin{equation}
\label{tilde_T_0}
d\widetilde T_0 = \frac 1 {\sqrt \gamma} dW + e^{\gamma(\phi^{(-)}- \widetilde T_0)} dt, \nonumber 
\end{equation}
with $\widetilde T_0(a) =  T(a).$
Then, $T$ and $\widetilde T_0$ are exponentially equivalent 
conditional on $\|T -\phi\| < \delta$ for a $\delta$ small enough.
\end{lem}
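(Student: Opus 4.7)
The strategy is to exploit the strict separation $\phi<\phi^{(+)}$ on the compact interval $[a,b]$ so that the ``ceiling'' drift $-e^{\gamma(T-\phi^{(+)})}dt$, which is the only term distinguishing $T$ from $\widetilde T_0$, is \emph{deterministically} exponentially small on the event $\{\|T-\phi\|_\infty<\delta\}$. A pathwise Gronwall-type comparison will then force $T-\widetilde T_0$ to be exponentially small in sup norm, which is stronger than exponential equivalence.

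First I would use compactness: since $\phi^{(+)}-\phi$ is continuous and strictly positive on $[a,b]$, there exists $\eta>0$ with $\phi^{(+)}(t)-\phi(t)\ge\eta$ for every $t$. Picking $\delta\le\eta/2$, the event $\{\|T-\phi\|_\infty<\delta\}$ forces $T(t)-\phi^{(+)}(t)\le-\eta/2$ for all $t\in[a,b]$, hence $e^{\gamma(T-\phi^{(+)})}\le e^{-\gamma\eta/2}$ uniformly. Next, since $T$ and $\widetilde T_0$ share the same driving Brownian motion and the same initial value at $t=a$, the noise cancels upon subtraction and $D:=T-\widetilde T_0$ solves the pathwise ODE
\[
\dot D \;=\; \bigl[e^{\gamma(\phi^{(-)}-T)}-e^{\gamma(\phi^{(-)}-\widetilde T_0)}\bigr]\;-\;e^{\gamma(T-\phi^{(+)})},\qquad D(a)=0.
\]
The map $x\mapsto e^{\gamma(\phi^{(-)}-x)}$ is strictly decreasing, so the first bracket has the opposite sign to $D$; multiplying the ODE by $D$ therefore gives, on the event in question,
\[
\tfrac12\tfrac{d}{dt}D^{2}\;=\;D\dot D\;\le\;|D|\cdot e^{\gamma(T-\phi^{(+)})}\;\le\;|D|\,e^{-\gamma\eta/2}.
\]
Working with $D^{2}$ to avoid the non-smoothness of $|D|$ at $0$, this yields $\tfrac{d}{dt}|D|\le e^{-\gamma\eta/2}$ wherever $D\ne0$, and integration from $a$ gives the deterministic bound $\|D\|_\infty\le(b-a)\,e^{-\gamma\eta/2}$.

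Consequently, on $\{\|T-\phi\|_\infty<\delta\}$ the deviation $\|T-\widetilde T_0\|_\infty$ is bounded \emph{a priori} by $(b-a)e^{-\gamma\eta/2}$, so for any fixed $\epsilon>0$ the joint event $\{\|T-\widetilde T_0\|_\infty>\epsilon,\;\|T-\phi\|_\infty<\delta\}$ is empty once $\gamma$ is large, and $\limsup_{\gamma}\gamma^{-1}\log P(\cdot)=-\infty$ trivially. The only delicate technical point is the Gronwall step, where $|D|$ fails to be differentiable at $0$; I would resolve this by differentiating $D^{2}$ and then taking a square root (or via a Dini-derivative argument). The structural reason the proof succeeds is that the ``floor'' drift $e^{\gamma(\phi^{(-)}-x)}$ acts as a \emph{restoring} force on $D$ rather than an amplifying one, which is why the naive Lipschitz constant $\gamma e^{\gamma\delta}$ never enters the final estimate.
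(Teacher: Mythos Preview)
Your proposal is correct and follows essentially the same approach as the paper: both exploit the strict gap $\phi<\phi^{(+)}$ to make the second exponential drift uniformly $\le e^{-\gamma\eta/2}$ on the event, and both use the monotonicity of $x\mapsto e^{\gamma(\phi^{(-)}-x)}$ to prevent amplification of the difference, arriving at the identical deterministic bound $\|T-\widetilde T_0\|_\infty\le (b-a)e^{-\gamma\eta/2}$. The only cosmetic difference is packaging: the paper phrases the monotonicity step as a comparison principle sandwiching $\widetilde T_0$ between $T$ and $T+\mu$ with $\mu(t)=\int_a^t e^{\gamma(T-\phi^{(+)})}ds$, which sidesteps the non-differentiability of $|D|$ at zero that you flag, whereas you run an energy estimate on $D^2$.
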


\begin{remark}
To explain exponential equivalence by its definition, we just need to prove for any $\delta  >0$ that
\begin{equation}
\limsup_{\gamma \to \infty}\frac 1 \gamma \log \P \bigl\{ \|T - \widetilde T_0 \| > \delta  \bigr\} = -\infty.\nonumber 
\end{equation}
\end{remark}

\begin{remark}
For the dual case
with fixed starting positions,
$$\phi^{(-)}(a) < T(a) \leq \phi^{(+)}(a),$$
we are interested in the convergence rate at $T=\phi$,
where $\phi^{(-)} < \phi \leq \phi^{(+)} \; on [a,b]. $
We will state the duel result in the end briefly, since the proof is parallel to the first case.
\end{remark}

\begin{lem}
\label{lem3.2}
Recall $\widetilde T_0$ defined in (\ref{tilde_T_0}). We define
\begin{equation}
\label{tilde_T}
d \widetilde T (t)= \frac 1 {\sqrt \gamma} dW + \frac{d}{dt} \sup_{[a,t]}(\phi^{(-)} -  \frac 1 {\sqrt \gamma} W)_+ dt,\nonumber 
\end{equation}
with $\widetilde T(a) = \widetilde T_0(a)$.
Then $\widetilde T_0 $ and  $\widetilde T$  are exponentially equivalent, conditioning on $\|\tilde T_0 -\phi\| < \delta$ where $\delta$ is small enough. 
\end{lem}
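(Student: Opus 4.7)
The plan is to recognize $\widetilde T$ as a Skorokhod-type reflection of the free Brownian path $\xi(t):=\tfrac{1}{\sqrt\gamma}W(t)$ off the lower barrier $\phi^{(-)}$, and to show that the soft exponential penalty in the SDE for $\widetilde T_0$ enforces the same reflection as $\gamma\to\infty$, up to corrections of order $\log\gamma/\gamma$. Setting
\[
R(t):=\int_a^t e^{\gamma(\phi^{(-)}(s)-\widetilde T_0(s))}\,ds,\qquad L(t):=\sup_{s\in[a,t]}\bigl(\phi^{(-)}(s)-\xi(s)\bigr)_+,
\]
the common Brownian driver and common initial value $T(a)$ yield the pathwise identity
\[
\widetilde T_0(t)-\widetilde T(t)=R(t)-\bigl(L(t)-L(a)\bigr).
\]
Hence the lemma reduces to showing that $\sup_{t\in[a,b]}\bigl|R(t)-(L(t)-L(a))\bigr|$ exceeds any fixed $\eta>0$ with probability at most $e^{-c(\eta)\gamma}$ on the event $\{\|\widetilde T_0-\phi\|<\delta\}$.

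Both bounds would be established by first-exit arguments that exploit the stiffness of the exponential penalty. For the upper bound $R\le L-L(a)+\eta$, let $t_*$ be the first time $R(t)-(L(t)-L(a))$ crosses $\eta/2$; then $\widetilde T_0(t_*)>\widetilde T(t_*)+\eta/2$, and the Skorokhod lower bound $\widetilde T\ge \phi^{(-)}+(T(a)-\xi(a)-L(a))$---valid because $T(a)\ge\phi^{(-)}(a)$ by hypothesis and $\xi(a)$ is controlled on the conditioning event---places $\widetilde T_0(t_*)$ at least $\eta/4$ above $\phi^{(-)}(t_*)$. Consequently the integrand of $R$ stays below $e^{-\gamma\eta/4}$ on any interval where $R-(L-L(a))$ remains above $\eta/2$, which prevents the excess from ever reaching $\eta$ for $\gamma$ large. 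A dual contradiction yields the lower bound $R\ge L-L(a)-\eta$: any such deficit would drive $\widetilde T_0$ at least $\eta/2$ below $\phi^{(-)}$, triggering drift of order $e^{\gamma\eta/2}$ that raises $\widetilde T_0$ back across the barrier in time $O(e^{-\gamma\eta/2})$ and closes the gap.

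The main obstacle is the bookkeeping around the initial shift $L(a)=(\phi^{(-)}(a)-\xi(a))_+$ and the possibility that $\phi^{(-)}$ and $\xi$ cross many times, so that $L$ grows on an irregular random set and the clean hitting-time picture above needs to be localized. I would partition $[a,b]$ into $O(1/\eta)$ subintervals on which the continuous $\phi^{(-)}$ oscillates by less than $\eta/10$ (using uniform continuity on the compact interval), run the drift-stiffness argument on each piece, and combine the super-polynomially small exceptional probabilities by a union bound. The conditioning $\|\widetilde T_0-\phi\|<\delta$ is invoked to keep $\widetilde T_0$, and hence $\xi(a)$, in a bounded regime so that the constants appearing in the Skorokhod and drift estimates are uniform in $\gamma$. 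Once both one-sided estimates are in place, exponential equivalence in the sense of the preceding remark follows immediately from the identity displayed above.
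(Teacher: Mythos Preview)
Your approach is genuinely different from the paper's, and it misses a key simplification. The paper observes that the SDE $d\widetilde T_0=\gamma^{-1/2}dW+e^{\gamma(\phi^{(-)}-\widetilde T_0)}dt$ is \emph{explicitly solvable}: writing $Y=\widetilde T_0-\xi$ with $\xi=\gamma^{-1/2}W$, one gets $d(e^{\gamma Y})=\gamma e^{\gamma(\phi^{(-)}-\xi)}dt$, and hence
\[
\widetilde T_0(t)=(T-\xi)(a)+\xi(t)+\frac{1}{\gamma}\log\Bigl\{1+\gamma\int_a^t e^{\gamma(\phi^{(-)}-\xi)(s)-\gamma(T-\xi)(a)}\,ds\Bigr\}.
\]
With this closed form in hand, $\widetilde T_0-\widetilde T$ is the difference between the log-integral term $V_\gamma(t)$ and the running supremum $V_\infty(t)=\sup_{[a,t]}(\phi^{(-)}-\xi)_+$, and the lemma becomes a Laplace asymptotic. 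The upper bound $V_\gamma\le V_\infty+\gamma^{-1}\log(1+\gamma(b-a))$ is immediate from $\int e^{\gamma f}\le (b-a)e^{\gamma\sup f}$. For the lower bound the paper restricts the integral to the near-maximum set $A_\epsilon=\{s:(\phi^{(-)}-\xi)(s)\ge\sup(\phi^{(-)}-\xi)-\epsilon\}$ and bounds $\P\{|A_\epsilon|<e^{-\gamma\delta/4}\}$ by a single Brownian modulus-of-continuity estimate, yielding a doubly-exponential tail. No hitting times, no partition, and no localization are needed.

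Your Skorokhod-plus-stiffness route can in principle be pushed through, but it is substantially heavier and the sketch leaves real work undone. In the upper-bound step you need $\widetilde T\ge\phi^{(-)}$, which is the Skorokhod property for the \emph{shifted} barrier $\phi^{(-)}-(T-\xi)(a)$ (this is the version appearing in Lemma~3.3); with your unshifted $L$ the inequality does not hold as stated and the bookkeeping you flag as ``the main obstacle'' is genuinely required, not optional. In the lower-bound step, partitioning so that $\phi^{(-)}$ oscillates by $\eta/10$ on each piece controls the deterministic barrier but not the Brownian driver $\xi$; you still need a quantitative estimate on $\sup_{|s-t|\le h}|\xi(s)-\xi(t)|$, which is exactly the ingredient the paper isolates and uses once. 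In short, the explicit solution collapses your dynamical argument into a two-line Laplace bound, and you should use it.
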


\begin{lem}
\label{lem3.3}
Assume that $\phi^{(-)} \leq \phi,on [a,b]. $
 The stochastic process $\widetilde T$ satisfies:
\begin{equation}
\label{3.8}
\widetilde T(t) = (\widetilde T-\frac 1 {\sqrt \gamma} W)(a) + \frac 1 {\sqrt \gamma} W(t) + \sup_{[a,t]} (\phi^{(-)} -  \frac 1 {\sqrt \gamma} W - (T -\frac 1 {\sqrt \gamma} W)(a)   )_+, \nonumber 
\end{equation}
on $[a,b]$, where $X = \frac 1 {\sqrt \gamma} W$ is a scaled Brownian motion.
So the rate function for $\widetilde T$  at $\{\widetilde T = \phi \geq \phi^{(-)}\}$ is
\begin{displaymath}
 J(\phi \mid \phi^{(-)})=
\left\{ \begin{array}{ll}
\frac 1 2 \int_{[a,b]\cap \{\phi=\phi^{(-)}\}} (\dot{\phi})_-^2
+\frac 1 2  \int_{[a,b] \cap \{\phi>\phi^{(-)}\}} \dot{\phi}^2
& \quad\textrm{if $\phi \in \mathcal{AC},$  }  \\
\infty & \quad\textrm{otherwise.} \end{array} \right.
\end{displaymath}
Furthermore, since $T$ and $\widetilde T$ are exponentially equivalent, $T$ has the same rate function.
\end{lem}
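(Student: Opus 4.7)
The plan is to establish formula (3.8) by integrating the SDE that defines $\widetilde T$ in Lemma~\ref{lem3.2}, and then use Schilder's theorem together with the contraction principle to transfer the rate function from the driving scaled Brownian motion to $\widetilde T$.

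First I would verify the Skorokhod-type representation (3.8). Write $X := \frac{1}{\sqrt\gamma} W$ and set $L(t) := \sup_{s\in[a,t]}(\phi^{(-)}(s) - X(s) - (T(a)-X(a)))_+$. The initial condition $\widetilde T(a) = T(a) \geq \phi^{(-)}(a)$ makes the quantity inside the $(\cdot)_+$ non-positive at $s=a$, so $L(a)=0$; a direct check then shows both sides of (3.8) agree at $t=a$. By the definition of $\widetilde T$ in Lemma~\ref{lem3.2}, the derivatives of both sides also coincide almost everywhere, giving (3.8) on $[a,b]$. The function $L$ is non-decreasing, starts at $0$, and can only grow on the coincidence set $\{\widetilde T = \phi^{(-)}\}$; these are precisely the defining properties of the upward Skorokhod reflection.

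Next I would apply Schilder's theorem: $X=\frac{1}{\sqrt\gamma}W$ satisfies a large deviation principle in $C([a,b])$ at speed $\gamma$ with good rate function $\tfrac12\int_a^b\dot\psi^2\,ds$ on absolutely continuous paths (and $+\infty$ otherwise). Formula (3.8) realizes $\widetilde T$ as the image of $X$ under the deterministic map
$$F(\psi)(t) := T(a) + (\psi(t)-\psi(a)) + \sup_{s\in[a,t]}\bigl(\phi^{(-)}(s) - \psi(s) + \psi(a) - T(a)\bigr)_+ .$$
Standard Lipschitz estimates for the Skorokhod reflection show $F$ is continuous from $C([a,b])$ into itself under the uniform norm, so the contraction principle yields
$$J(\phi \mid \phi^{(-)}) \;=\; \inf\Bigl\{\tfrac12\textstyle\int_a^b \dot\psi^2\,ds \;:\; F(\psi)=\phi\Bigr\}.$$

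I would then evaluate this infimum explicitly. If $\phi \notin \mathcal{AC}$, or if $\phi(s)<\phi^{(-)}(s)$ somewhere, no feasible $\psi$ exists (because $F$ preserves absolute continuity and $F(\psi)\geq \phi^{(-)}$), so $J=\infty$. Otherwise, define $L(t) := \phi(t)-T(a)-(\psi(t)-\psi(a))$; feasibility forces $L$ to be non-decreasing, with $L(a)=0$, and $L$ to be constant on the open set $\{\phi>\phi^{(-)}\}$. On $\{\phi>\phi^{(-)}\}$ we therefore must have $\dot\psi=\dot\phi$ almost everywhere, contributing $\tfrac12\int_{\phi>\phi^{(-)}}\dot\phi^2\,ds$ to the cost. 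On the closed coincidence set $\{\phi=\phi^{(-)}\}$ the constraint reads $\dot\phi=\dot\psi+\dot L$ with $\dot L\geq0$ a.e., so pointwise minimisation of $\dot\psi^2$ selects $\dot L=(\dot\phi)_+$ and $\dot\psi=-(\dot\phi)_-$, contributing $\tfrac12\int_{\phi=\phi^{(-)}}(\dot\phi)_-^2\,ds$. Summing gives the stated formula, and the constructed $\psi$ is an admissible AC preimage, so the infimum is attained. The rate function for $T$ follows by the exponential equivalence chain $T \sim \widetilde T_0 \sim \widetilde T$ established in Lemmas~\ref{lem3.1} and~\ref{lem3.2}.

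The main obstacle will be handling the coincidence set $\{\phi=\phi^{(-)}\}$ rigorously: one must justify the a.e.\ decomposition $\dot L = (\dot\phi)_+$ at the optimum (in particular that $\phi$ and $\phi^{(-)}$ have matching derivatives a.e.\ on this set, which follows from $\phi \geq \phi^{(-)}$ and absolute continuity), verify that the candidate $\psi$ is genuinely absolutely continuous, and rule out lower-cost feasible choices that would let $L$ grow off the coincidence set. These reduce to standard Lebesgue-decomposition facts for monotone functions and the support properties of the Skorokhod reflection term.
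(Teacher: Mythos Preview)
Your proposal is correct and follows essentially the same strategy as the paper: apply Schilder's theorem to $X=\gamma^{-1/2}W$, use the contraction principle through the continuous Skorokhod-type map (3.8), and evaluate the resulting variational problem by splitting into the coincidence set $\{\phi=\phi^{(-)}\}$ and the set $\{\phi>\phi^{(-)}\}$, exhibiting the same explicit minimizer $\dot\psi = -(\dot\phi)_-\mathbf{1}_{\{\phi=\phi^{(-)}\}} + \dot\phi\,\mathbf{1}_{\{\phi>\phi^{(-)}\}}$. Your Skorokhod-reflection framing (with the monotone local-time term $L$ supported on the contact set) is a bit more streamlined than the paper's three-case analysis, but the mathematical content is the same.
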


To obtain some intuitions about the above lemmas, we note the definition of $\tilde{T}_0$ in (\ref{tilde_T_0}) and transform it into following:
\begin{equation}
d( \tilde T_0 - \frac{1}{\sqrt{\gamma}} W) = e^{\gamma (\phi^{(-)} - \frac{1}{\sqrt{\gamma}}W)  - \gamma ( \tilde T_0 - \frac{1}{\sqrt{\gamma}}W) } dt \label{eq3_6}.
\end{equation}
To simplify the notation, we denote $\tilde T_0 - \frac{1}{\sqrt{\gamma}} W$ as $X_\gamma$ and $\phi^{(-)}- \frac{1}{\sqrt{\gamma}} W$ as $Y_\gamma$.
Then we rewrite (\ref{eq3_6}) as
\begin{equation}
dX_\gamma = e^{\gamma(Y_\gamma-X_\gamma)}dt,\nonumber 
\end{equation}
where $X_\gamma(a) = 0$ and $Y_\gamma (a) \leq 0$.
We then observe that
\begin{equation}
\lim_{\gamma \to +\infty} \big( X_\gamma(t) - \sup_{[0,t]}( Y_\gamma )_+ \big) = 0.\nonumber 
\end{equation}
in $L^\infty$ norm.

\begin{lem}
\label{similar_proof}
Consider the dual case $\phi^{(+)} \geq \phi$ on $[a,b]. $
 The stochastic process $T$ satisfies:
\begin{equation}
\widetilde T(t) = (\widetilde T-X)(a) + X(t) - \sup_{[a,t]} \biggl\{-(\phi^{(+)} - X - (\widetilde T -X)(a)   )\biggr\}_+,\nonumber 
\end{equation}
on $[a,b]$, where $X = \frac 1 {\sqrt \gamma} W$ is a scaled Brownian motion.
So $T$ has the convergence rate at $\{\widetilde T = \phi \leq \phi^{(+)}\}$:
\begin{displaymath}
 J(\phi| \phi^{(+)})=
\left\{ \begin{array}{ll}
\frac 1 2 \int_{[a,b]\cap \{\phi=\phi^{(+)}\}} (\dot{\phi})_+^2
+\frac 1 2  \int_{[a,b] \cap \{\phi<\phi^{(+)}\}} \dot{\phi}^2
&\quad \textrm{if $\phi \in \mathcal{AC},$  }  \\
\infty &\quad \textrm{otherwise.} \end{array} \right.
\end{displaymath}
Furthermore, since $T$ and $\widetilde T$ are exponentially equivalent, $T$ has the same rate function.
\end{lem}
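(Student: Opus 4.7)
The plan is to deduce this dual statement from Lemma~\ref{lem3.3} by a reflection symmetry, so that no new probabilistic content is required. First I would introduce the substitution
\[
S := -T,\quad \psi := -\phi,\quad \psi^{(-)} := -\phi^{(+)},\quad \psi^{(+)} := -\phi^{(-)},\quad W' := -W,
\]
observing that $W'$ is again a standard Brownian motion and $X' := \tfrac{1}{\sqrt{\gamma}}W' = -X$. Negating both sides of (\ref{local_ldp_eq}) turns the SDE for $T$ into
\[
dS = \tfrac{1}{\sqrt{\gamma}}\,dW' + \bigl(e^{\gamma(\psi^{(-)}-S)} - e^{\gamma(S-\psi^{(+)})}\bigr)dt,
\]
and the hypotheses $\phi^{(-)}(a) < T(a) \leq \phi^{(+)}(a)$ with $\phi^{(-)} < \phi \leq \phi^{(+)}$ on $[a,b]$ translate to $\psi^{(-)}(a) \leq S(a) < \psi^{(+)}(a)$ with $\psi^{(-)} \leq \psi < \psi^{(+)}$ on $[a,b]$. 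Hence $S$ falls precisely within the scope of Lemmas~\ref{lem3.1}--\ref{lem3.3}.

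Next I would apply Lemmas~\ref{lem3.1} and~\ref{lem3.2} to $S$ to obtain exponentially equivalent approximations $\widetilde S_0$ and $\widetilde S$, and invoke Lemma~\ref{lem3.3} to get
\[
\widetilde S(t) = (\widetilde S - X')(a) + X'(t) + \sup_{[a,t]}\bigl(\psi^{(-)} - X' - (\widetilde S - X')(a)\bigr)_+.
\]
Substituting back $\widetilde T = -\widetilde S$, $X' = -X$, and $\psi^{(-)} = -\phi^{(+)}$, and using the fact that the reflection $u\mapsto -u$ preserves the $\mathcal{L}_\infty$ distance (so exponential equivalence of $S$ and $\widetilde S$ transfers to $T$ and $\widetilde T$), yields the stated representation for $\widetilde T$ in which the outward reflection at the lower envelope is replaced by a downward reflection off the upper envelope $\phi^{(+)}$.

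The rate function is then obtained by the same reflection: $\dot{\psi} = -\dot{\phi}$ implies $(\dot{\psi})_- = (\dot{\phi})_+$ and $\dot{\psi}^{\,2} = \dot{\phi}^{\,2}$, while $\{\psi = \psi^{(-)}\} = \{\phi = \phi^{(+)}\}$ and $\{\psi > \psi^{(-)}\} = \{\phi < \phi^{(+)}\}$, so the functional $J(\psi \mid \psi^{(-)})$ from Lemma~\ref{lem3.3} is transported to precisely the claimed $J(\phi \mid \phi^{(+)})$. The only delicate step in this program is the algebraic bookkeeping needed to convert the upward supremum representation into the downward one, in particular tracking the sign in the identity $\sup_{[a,t]}(v)_+ = -\inf_{[a,t]}(-v)\wedge 0$; since the substitutions are purely deterministic this step is mechanical and introduces no new probabilistic difficulty beyond that already handled in the proofs of Lemmas~\ref{lem3.1}--\ref{lem3.3}.
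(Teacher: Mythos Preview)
Your proposal is correct and matches the paper's approach: the paper's own proof of this lemma simply states that ``the proof is symmetric'' to the earlier lemmas and omits the details, and your reflection substitution $T\mapsto -T$, $\phi\mapsto -\phi$, $\phi^{(\pm)}\mapsto -\phi^{(\mp)}$, $W\mapsto -W$ is precisely the mechanism that makes that symmetry rigorous. You have spelled out the bookkeeping the paper leaves implicit, but the underlying idea is the same.
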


\begin{proof}[Proof of Lemma \ref{lem3.1}]
Since $T < \phi^{(+)} \; on [a,b]$, there is $\eta > 0, $ such that
for $\|T-\phi\|\leq \delta,\|T_\pm-\phi_\pm \|\leq \delta,$
$$ T-\phi^{(+)} \leq - \eta.  $$
Let $\mu(t) \doteq \int_a^t e^{\gamma(T - T_+)} ds,$
$$d(T+\mu ) = \frac 1 {\sqrt \gamma} dW + e^{\gamma(T_-+\mu - (T+\mu))}dt.$$

If we prove
\begin{equation}
\label{3.2}
|\widetilde T_0(t) -T(t) | \leq  \mu (t),\nonumber 
\end{equation}
then  $$\|\widetilde T_0 -T\| \leq \sup_{[a,b]} \mu \leq e^{-\frac {\gamma \eta} 2 }(b-a),$$
and therefore the exponential equivalence is proved.

To prove (\ref{3.2}), we compare the formula for $\widetilde T_0 $ and $T+\mu.$ Comparing the right side of $T$ and $T_0$, by comparison principle, we have $T(t) \leq \tilde T_0(t).$ Comparing the right side of $T+\mu$ and $T_0$, by comparison principle, we have
$T(t)+ \mu(t) \geq \widetilde T_0(t). $
Therefore, we give an upper bound for the differences, which is exactly (\ref{3.2}).
\end{proof}

\begin{proof}[Proof of Lemma \ref{lem3.2}]
For $$d\widetilde T_0 = \frac 1 {\sqrt \gamma}  dW + e^{\gamma (\phi^{(-)}-\widetilde T_0)}dt,$$ we directly solve it as
$$\widetilde T(t) =\biggl(T-\frac 1 {\sqrt \gamma}W\biggr)(a) + \frac 1 {\sqrt \gamma}W(t) + \frac{1}{\gamma} \log\biggl\{ 1+ \gamma \int_a^t e^{\gamma (\phi^{(-)} - \frac 1 {\sqrt \gamma}W )(s) - \gamma (T-\frac 1 {\sqrt \gamma}W)(a)} ds  \biggr\}.$$
We denote the drift term as
\begin{align*}
V_\infty (t) &= \sup_{s \in [a,t]} \biggl(\phi^{(-)}(s) - \frac 1 {\sqrt \gamma}W(s)\biggr)_+, \\
V_\gamma (t) &= \frac{1}{\gamma} \log\biggl\{ 1+ \int_a^t \gamma e^{\gamma(\phi^{(-)}(s) - \frac 1 {\sqrt \gamma}W(s))}ds \biggr\}.
\end{align*}
Note that $\widetilde T - \widetilde T_0 = V_\infty - V_\gamma.$ Next, we compare the two drift terms, and choose $\gamma $ large enough so that $\frac{1}{\gamma} \log(1+ \gamma) \leq \frac{\delta}{2}$, and
\begin{align}
\label{V_upper_bound}
V_\gamma (t) &\leq V_\infty(t) + \sup_{s \in [a,t]} \frac{1}{\gamma}  \log\biggl\{ e^{-\gamma \sup_{s\in [a,t]}(\phi^{(-)}(s) - \frac 1 {\sqrt \gamma}W(s))_+ } \gamma (b-a) \biggr\} \nonumber \\
&\leq V_\infty(t) +  \frac{1}{\gamma} \log\{1+ \gamma \} \leq \frac \delta 2.
\end{align}


To obtain an inequality of the opposite direction, we let 
$$A_\epsilon \doteq \biggl\{t: (\phi_- - \frac 1 {\sqrt \gamma} W)_+ \geq \sup_{[a,t]}(\phi^{(-)} - \frac 1 {\sqrt \gamma}  W )_+ - \epsilon    \biggr\} \subset [0,1].$$ We have the following estimates
\begin{eqnarray}
V_\gamma(t) \geq V_\infty(t) + \frac{1}{\gamma} \log |A_\epsilon| - \epsilon. \notag
\end{eqnarray}
 Since $\phi^{(-)} \in C_0([0,1]),$ we can choose $\delta >0,$ $\gamma $ large enough so that $$\sup_{|s-t| \leq e^{-\frac {\gamma\delta} 4}} |  \phi^{(-)}(t)  - \phi^{(-)}(s)| \leq \frac{\epsilon}{2}. $$
Therefore,
\begin{align}
\label{V_lower_bound}
\P\biggl\{\frac{1}{\gamma} \log |A_\epsilon | \geq - \frac \delta 4 \biggr\} \notag
&\leq \P \biggl\{ \sup_{|s-t| \leq e^{-\frac {\gamma\delta} 4}} |  (\phi^{(-)} - \frac 1 {\sqrt\gamma} W)(t)  - (\phi^{(-)} - \frac 1 {\sqrt\gamma} W)(s)| \geq \epsilon   \biggr\}  \notag \\
&\leq \P \biggl\{ |\frac 1 {\sqrt\gamma} W(t) - \frac 1 {\sqrt\gamma} W(s)| \geq \frac{\epsilon}{2}   \biggr\} \nonumber \notag \\
&\leq a(\gamma) \doteq C \exp \biggl\{ \frac {\gamma\delta} 4 - \frac{\gamma \epsilon^2}{4}e^{\frac {\gamma\delta} 4}   \biggr\}.
\end{align}

Therefore, setting $\epsilon = \frac{\delta}{2}$ and applying (\ref{V_upper_bound}) and (\ref{V_lower_bound}), we obtain
\begin{align}
\label{3.4_v2}
V_\gamma (t) \leq V_\infty(t) +  \frac{1}{\gamma} \log\{1+ \gamma \} \leq \frac \delta 2, \\
\label{3.5_v2}
V_\gamma (t) \geq V_\infty(t) + \frac{1}{\gamma} \log |A_\epsilon| - \epsilon \geq \frac 3 4 \delta.
\end{align}
where (\ref{3.5_v2}) holds with probability $1- a(\gamma).$ Furthermore, combining (\ref{3.4_v2}) and (\ref{3.5_v2}), we have
$\|V_\gamma - V_\infty\|  \leq \delta,$ with probability $1-a(\gamma)$, and
\begin{equation*}
\limsup_{\gamma \to \infty} \frac 1 \gamma \log \P \{ \|\widetilde T - \widetilde T_0\| \leq \delta \} \leq \lim_{\gamma \to \infty} \frac 1 \gamma \log a(\gamma) = - \infty.
\end{equation*}
Hence, we proved the exponential equivalence of $T,\widetilde T_0, \widetilde T.$
\end{proof}

\begin{proof}[Proof of Lemma \ref{lem3.3}]
We have the equation:
\begin{equation}
\dot \phi = \dot \psi + \frac d {dt} \sup_{[a,t]} \biggl(\phi^{(-)} - \psi - (\phi - \psi)(a) \biggr)_+. \nonumber 
\end{equation}
Consider the equation as a mapping from $T = \phi,T_- = \phi_-$ to $X= \psi $, the rate function is

\begin{displaymath}
\left\{ \begin{array}{ll}
\frac{1}{2}\inf_{\phi(t) = (\phi-\psi)(a) + \psi(t) + \sup_{[a,t]} ( \phi_- - \psi - (\phi - \psi )(a)   )_+ }
{\int_a^b \dot{\psi}^2}
& \quad\textrm{if $\phi \in \mathcal{AC},$  }  \\
\infty & \quad\textrm{otherwise.} \end{array} \right. \nonumber 
\end{displaymath}

Case 1: On $$\biggl\{t:(\phi^{(-)} - \psi - (\phi-\psi)(a))_+(t) < \sup_{[a,t]}(\phi^{(-)} - \psi - (\phi-\psi)(a))_+\biggr\},$$
we have 
$$(\phi - \psi)(t) > (\phi-\psi)(a) + (\phi^{(-)} - \psi - (\phi-\psi)(a))_+ \geq (\phi^{(-)} - \psi)(t). $$
Therefore, $\phi(t) > \phi^{(-)}(t).$
Meanwhile, we have: $$\frac{d}{dt} \sup_{[a,t]}(\phi_- - \psi - (\phi-\psi)(a))_+ = 0, $$ and thus
$\dot \phi =\dot \psi.$

Case 2:
For the $t$ that belongs to the set
$$\biggl\{t: (\phi^{(-)} - \psi - (\phi-\psi)(a))_+ = \sup_{[a,t]}(\phi^{(-)} - \psi - (\phi-\psi)(a))_+ \biggr\}
\cap \biggl\{t: \phi^{(-)} - \psi - (\phi-\psi)(a) \geq 0\biggr\}, $$
we have
\begin{align*}
\phi(t) - \psi(t) 
&= (\phi-\psi)(a) + (\phi^{(-)} - \psi - (\phi-\psi)(a))_+ \nonumber \\
&=  (\phi-\psi)(a) + (\phi^{(-)} - \psi - (\phi-\psi)(a)) \\
&= \phi^{(-)}(t) - \psi(t),
\end{align*}
and therefore: $\phi = \phi^{(-)}.$

Since $\phi^{(-)} - \psi - (\phi-\psi)(a) > 0, $ and $\sup_{[a,t]}$ is non-decreasing, we have 
$$\frac{d}{dt} \sup_{[a,t]}(\phi^{(-)} - \psi - (\phi-\psi)(a))_+ \geq 0. $$
Therefore, $\dot \phi - \dot \psi \geq 0.$
When $\dot \phi < 0,$ $0 \geq \dot \phi = (\dot \phi)_- \geq \dot \psi,$  we have the inequality $\dot \psi ^2  \geq   (\dot \phi)_-^2.$
When $\dot \phi \geq 0,$ $(\dot \phi )_- = 0,$ we also have this inequality:  $\dot \psi ^2  \geq   (\dot \phi)_-^2.$
Combining these two, we have $\int \dot \psi ^2  \geq \int  (\dot \phi)_-^2. $

Case 3:
On the part $ \{t: (\phi^{(-)} - \psi - (\phi-\psi)(a))_+ = \sup_{[a,t]}(\phi^{(-)} - \psi - (\phi-\psi)(a))_+ \} \cap \{t: \phi^{(-)}(t) - \psi(t) - (\phi-\psi)(a) < 0 \},$
we have $$\phi^{(-)}(t)  - \psi(t) < (\phi - \psi)(a) = \phi(t) - \psi(t) , $$
and thus $\phi^{(-)}(t) < \phi(t), $ 
 $$ \frac{d}{dt} \sup_{[a,t]}(\phi^{(-)} - \psi - (\phi-\psi)(a))_+ = 0 $$
holds almost everywhere on this set, implying $\dot{\phi} = \dot{\psi}$.

Therefore, combining the three cases above, we obtain
\begin{eqnarray}
\int_a^b \dot{\psi}^2 \geq \int_{[a,b] \cap \{\phi^{(-)} = \phi \}}  (\dot \phi)_-^2
+ \int_{[a,b] \cap \{\phi^{(-)} <  \phi \}} \dot \phi^2.
\end{eqnarray}
Note that the equality can be achieved by setting
\begin{equation}
\psi(t) = \int_{a}^t ((\dot \phi )_- 1_{\phi=\phi^{(-)}}
+ \dot \phi 1_{\phi > \phi^{(-)}}) ds. \nonumber 
\end{equation}
Also, equation (\ref{3.8}) defines a continuous map from $X$ to $\widetilde T.$
By the contraction principle and Schider's Theorem, the result is proved.

\end{proof}

\begin{proof}[Proof of Lemma \ref{similar_proof}]
The proof is symmetric to proof of Lemma \ref{lem3.2}. We will skip here. 
\end{proof}

\subsection{The Almost Interlaced Particle Positions}
\label{almost_interlaced}
In this section, we show that
\begin{equation}
T_{n-1,k-1} \geq T_{n,k} \geq T_{n-1,k}.\nonumber 
\end{equation}
is "almost" true for all $(n,k).$ By "almost" we mean that the inequalities may be incorrect for most time, while, we can consider a small displacement: $f_n > 0$ so that
\begin{equation}
T_{n-1,k-1}+ f_n \geq T_{n,k} \geq T_{n-1,k} - f_n.\nonumber 
\end{equation}
Then the inequalities hold for most time for all $(n,k)$ with a high probabilities.
Also, we found that $f_n$ can be chosen as $f_n = \frac C {\sqrt \gamma}, $ which decreases to 0 as $\gamma \to \infty.$

We will first deal with the case for four particles, which is the case for the first few particles in a diamond: $T_{1,1}, T_{2,1}, T_{2,2}, T_{3,2}.$
For the general case of $\{T_{n,k}:1 \leq k \leq n \leq N\},$ the $N$ layers of particles have a very similar pattern to the first four particles, and thus the proof resembles the proof for the four particles.

\begin{remark}
For an event $A^\gamma$, we write $$\P\{ A^\gamma \} \sim e^{-\gamma \infty},$$
when we have $$- \lim_{\gamma \to \infty}\frac 1 \gamma \log \P\{A^\gamma\} = \infty.$$
\end{remark}

We define
\begin{align*}
A_n &= \{T_{n,k} - T_{n+1,k} \geq -f_n(\gamma), T_{n+1,k+1} - T_{n,k} \geq -f_n(\gamma), 1 \leq k \leq n\}, \\
B_n &= \{ T_{n+1,k}-T_{n,k}\geq -g_n(\gamma), T_{n,k} - T_{n+1, k+1}\geq - g_n(\gamma), 1 \leq k \leq n \}, \\
C_n &= \{T_{n+1,k} - T_{n+1,k+1} \geq -g_n(\gamma), 1 \leq k \leq n\}.
\end{align*}
We will show that 
\begin{align*}
&\P \{A_n ^c \} \sim e^{- \gamma \infty},\\
&\P \{C_n ^c \} \sim e^{- \gamma \infty},\\
&\P \{C_n \cap B_n^c \} \sim e^{- \gamma \infty}.
\end{align*}

\subsubsection{The Case for 4 Particles}

We address the scaled system of four particles on the time interval $[0,1]$, namely
\begin{align*}
dT_0 &= \frac{1}{\sqrt \gamma} dW_0, \\
dT_+ &= \frac{1}{\sqrt \gamma} dW_+ + e^{\gamma(T_0 - T_+)} dt, \\
dT_- &= \frac{1}{\sqrt \gamma} dW_- - e^{\gamma(T_- - T_0)}dt,\\
dT &=  \frac{1} {\sqrt \gamma} dW + (e^{\gamma(T_- - T)\} - g\{\gamma(T- T_+)})dt ,
\end{align*}
with the initial condition $T(0) = T_0(0) = T_+(0) = T_-(0) = 0. $


\begin{lem}
We let
$$g(\gamma) = 2 f(\gamma) = \frac{2}{\sqrt \gamma},$$ and
\begin{align*}
A &= \biggl\{T_+ - T_0 \geq - f(\gamma), T_0 - T_- \geq - f(\gamma)\; \biggr\}, \\
B &= \biggl\{ T_+-T \geq  - 2 g(\gamma),T-T_- \geq - 2 g(\gamma) \;   \biggr\}.
\end{align*}
Then we have
\begin{align}
\P (A^c) \sim e^{- \gamma \infty},    \notag \\
\P (A, B^c) \sim e^{- \gamma \infty}. \notag
\end{align}
\end{lem}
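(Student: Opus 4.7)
The plan is to bound $\P(A^c)$ and $\P(A \cap B^c)$ separately; both estimates rest on the same mechanism, namely that the exponential drift in the SDE for a gap between adjacent particles becomes doubly exponentially large as soon as the gap drops a little below zero, creating an effective barrier that is super-exponentially hard to cross in the scale $\gamma$.

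For $\P(A^c)$, by the union bound it suffices to treat one of the two gaps, say $u(t) := T_+(t) - T_0(t)$; subtracting the SDEs gives $du = \tfrac{\sqrt{2}}{\sqrt{\gamma}} d\tilde W + e^{-\gamma u}\, dt$ with $u(0)=0$, and the time-space rescaling $v(s) := \gamma\, u(s/\gamma)$ turns this into the autonomous SDE
\[
dv = \sqrt{2}\, dB_s + e^{-v}\, ds, \qquad v(0) = 0, \qquad s \in [0, \gamma].
\]
The main technical step here is to derive the uniform-in-time exponential-moment bound $\E[e^{-\alpha v(s)}] \le \alpha^\alpha$ for $\alpha \ge 1$: It\^o on $e^{-\alpha v}$, Jensen applied to the resulting $\E[e^{-(\alpha+1)v}]$ term, and ODE analysis of $f(s) := \E[e^{-\alpha v(s)}]$ give $f'(s) \le \alpha^2 f - \alpha f^{(\alpha+1)/\alpha}$, whose stable equilibrium is $\alpha^\alpha$ and which dominates $f(0) = 1$ whenever $\alpha \ge 1$. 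Markov's inequality optimized at $\alpha = e^{M-1}$ then yields the pointwise doubly-exponential tail $\P(v(s) \le -M) \le e^{-e^{M-1}}$. A union bound over a sufficiently fine time grid in $[0, \gamma]$ together with standard Brownian modulus-of-continuity estimates upgrades this to $\P(\inf_{s \in [0,\gamma]} v(s) \le -\sqrt{\gamma}) \sim e^{-\gamma\infty}$, and the symmetric argument for $T_0 - T_-$ concludes the first claim.

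For $\P(A \cap B^c)$, I work conditionally on $A$, which gives $T_0 - T_+ \le f$ and $T_- - T_0 \le f$, hence $T_- - T_+ \le 2f = g$ uniformly on $[0,1]$. Setting $w_+ := T_+ - T$, one computes
\[
dw_+ = \tfrac{\sqrt{2}}{\sqrt{\gamma}}\, dW' + e^{-\gamma w_+}\, dt + \bigl( e^{\gamma(T_0 - T_+)} - e^{\gamma(T_- - T)}\bigr) dt,
\]
with an analogous identity for $w_- := T - T_-$. The crucial observation is that once $w_+$ descends to the threshold $-2g$, the restoring drift $e^{-\gamma w_+}$ is at least $e^{4\sqrt{\gamma}}$, while the bound $T_- - T \le (T_- - T_+) - 2g \le -g$ forces $e^{\gamma(T_- - T)} \le e^{-2\sqrt{\gamma}}$ on $A$; the extra slack of size $g$ built into the threshold $-2g$ (as opposed to the slack $f$ used in the definition of $A$) is exactly what makes the restoring term dominate all other drifts in this critical window. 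Using the pathwise comparison theorem for one-dimensional SDEs with common Brownian noise, I dominate $w_+ + 2g$ from below on the set $\{w_+ \le 0\}$ by an autonomous process of the same type as $u$ above, and reapply the moment-bound/grid argument verbatim. The symmetric treatment of $w_-$ completes the proof.

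The step I expect to be the main obstacle is making the comparison rigorous in the second part, because the SDE for $w_+$ is genuinely non-autonomous: the drifts $e^{\gamma(T_0 - T_+)}$ and $e^{\gamma(T_- - T)}$ depend on the full joint history of the four-particle system, and the event $A$ is conditioning on an entire sample path. My plan is to introduce, on the event $A$, a carefully constructed autonomous lower-bounding process driven by the same Brownian motion whose drift function dominates the effective drift of $w_+$ pointwise in the critical window, and to exit the comparison outside that window by a strong Markov restart at the first crossing of $\{w_+ = 0\}$; this reduces the bound in the non-autonomous setting to the autonomous one already handled in the first part.
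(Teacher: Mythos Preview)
Your treatment of $\P(A^c)$ is correct and takes a genuinely different route from the paper. You pass to the autonomous SDE $dv=\sqrt{2}\,dB+e^{-v}\,ds$ on $[0,\gamma]$ via space--time rescaling of the gap $T_+-T_0$ and extract the doubly-exponential tail $\P(v(s)\le -M)\le e^{-e^{M-1}}$ from the moment bound $\E[e^{-\alpha v}]\le\alpha^\alpha$. The paper instead uses the explicit representation $T_+=\gamma^{-1/2}W_++\gamma^{-1}\log\bigl\{1+\gamma\int_0^t e^{\gamma(T_0-\gamma^{-1/2}W_+)}ds\bigr\}$ and shows that the event $\{T_+-T_0\le -f\}$ forces the Lebesgue measure of the near-maximizer set of $T_0-\gamma^{-1/2}W_+$ to be at most $e^{-c\sqrt\gamma}$, which is then controlled by a Brownian modulus-of-continuity estimate. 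Your argument is cleaner in that it does not rely on explicit solvability; the paper's is more hands-on.

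For $\P(A\cap B^c)$, however, the comparison you sketch fails at the level you propose. On $A$ one only has $T_--T\le g+w_+$, so when $w_+$ is near $0$ the competing drift $e^{\gamma(T_--T)}$ can be as large as $e^{\gamma g}=e^{2\sqrt\gamma}$, while the restoring drift $e^{-\gamma w_+}$ is only of order $1$; hence no process ``of the same type as $u$'' bounds $w_++2g$ from below on all of $\{w_+\le 0\}$, and in any case a one-sided process started from $2g=4/\sqrt\gamma$ reaches $0$ with probability bounded away from zero (its drift is negligible while it is positive, so it behaves like $\gamma^{-1/2}$-scaled Brownian motion there). The comparison only becomes valid once $w_+\le -g$, where $T_--T\le 0$ forces the competing drift to be $\le 1$. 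This is precisely what the paper exploits: it stops at the first time $T-T_+=g$, observes that on $A$ the subsequent drift of $T-T_+$ is at most $1-e^{\gamma g}$ as long as $T-T_+\ge g$, and bounds the probability of climbing from $g$ to $2g$ against this enormous negative drift by Gaussian tail estimates, splitting the post-stopping interval into $[0,\gamma^{-1/2}]$ and $[\gamma^{-1/2},1]$. Your plan is repaired by moving the restart level from $0$ to $-g$, at which point it essentially coincides with the paper's argument; alternatively you could compare globally with the two-sided well $d\hat w=\sqrt{2}\,\gamma^{-1/2}\,dW'+(e^{-\gamma\hat w}-e^{\gamma g+\gamma\hat w})\,dt$, but then the moment analysis must be redone and is not ``verbatim''.
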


\begin{proof}
In the special case of $g(t) = e^t,$ we obtain the solution of $T_+$:
\begin{equation}
T_+
= \frac 1 {\sqrt \gamma}W_+ + \frac 1 \gamma \log\biggl\{ 1+ \int_0^t \gamma e^{\gamma(T_0 - \frac 1 {\sqrt \gamma} W_+) } ds \biggr\}. \notag
\end{equation}

\begin{equation}
\widetilde T_+
= \frac 1 {\sqrt \gamma}W_+ + \sup \biggl(T_0 - \frac{1}{\sqrt{\gamma}}W_+ \biggr)_+. \notag
\end{equation}

In the general case, notice $T_+$ and $\widetilde T_+$ are exponentially equivalent. We can replace $T_+$ by $\widetilde T_+.$

Therefore, we define
\begin{equation}
A_\epsilon = \biggl\{s \in [0,t]: T_0(s) -\frac{1}{\sqrt \gamma}W_+(s) - \sup_{[0,t]} (T_0 -\frac{1}{\sqrt \gamma}W_+)_+ \geq - \epsilon \biggr\},\notag
\end{equation}
and let $|A_\epsilon|$ denote the Lebesgue measure of $A_\epsilon$.
Set $\eta = e^{\gamma (- f + \epsilon - \frac{\log \gamma}{\gamma})}, \delta = \epsilon,$ we have 
\begin{align}
&\biggl\{T_+(t) - T_0(t) \leq - f \biggr\}
\subset \biggl\{ \log |A_\epsilon| \leq \gamma(-f + \epsilon) - \log \gamma  \biggr\} \notag \\
&\subset \biggl\{\sup_{t,s \in [0,1], |t-s| < \eta } |(T_0 - \frac 1 {\sqrt \gamma}W_+)(t) -(T_0 - \frac 1 {\sqrt \gamma}W_+)(s) | > \epsilon  \biggr\}. \nonumber 
\end{align}
Since $\frac {\sqrt \gamma} {\sqrt 2}(T_0 - \frac 1 {\sqrt \gamma}W_+)$ is a standard Brownian motion,
we have
\begin{align}
& \frac 1 \gamma \log \P\{T_+(t) - T_0(t) \leq - f \}
\leq \frac 1 \gamma \log\{\frac{1}{\epsilon \sqrt {\gamma \eta}}\} - \frac{\delta^{2}}{8 \eta}  \notag
\end{align}
Setting $\epsilon = f$,  
the right-hand side is $  \frac {\log {\gamma }} {3\gamma} - \frac{ \gamma ^ {\frac 1 3}} 8, $
which converges to $- \infty$ as $\gamma \to \infty$. Therefore, we have
\begin{equation}
\P \{\inf_{[0,1]}(T_+ - T_0) \leq -f(\gamma) = - \frac 1 {\sqrt \gamma}\} \sim e^{-\gamma \infty}.\nonumber 
\end{equation}
Similarly, we can prove
\begin{equation}
\P \{\inf_{[0,1]}(T_0 - T_-) \leq -f(\gamma) = - \frac 1 {\sqrt \gamma}\} \sim e^{- \gamma \infty}.\nonumber 
\end{equation}
Therefore, we conclude that
$\P \{A^c\} \sim e^{-\gamma \infty}.     $

For those sample paths which satisfy $\inf_{[0,1]}(T_+ - T_0) \leq -f(\gamma) = -2g(\gamma)$, there is a stopping time $s$ such that
$$(T-T_+)(s) = g(\gamma). $$
Recall that for $t>s,$
\begin{align}
&T_+ (t) -T_+(s)=  \frac 1   {\sqrt \gamma}( W_+ (t) - W_+(s))+ \int_s^t e^{ \gamma(T_0 - T_+) }. \nonumber 
\end{align}
Then we have the lower bound:
$$T_+ (t) -T_+(s) \geq  \frac 1   {\sqrt \gamma}( W_+ (t) - W_+(s)) .  $$
Note that in $A$,
$$T_+ - T_- \geq -2f(\gamma).$$
For $T - T_+ \geq g(\gamma)$, we have
\begin{align*}
&T_- - T \leq 2f(\gamma) - g(\gamma)=0 \nonumber \\
&\P \biggl\{A, \sup_{[0,1]}\{T-T_+\} \geq 2 g(\gamma) \biggr\} \nonumber \\
&\leq \P \biggl\{ \sup_{t \in [s,1]} \biggl\{   \frac 1 {\sqrt \gamma} (W(t)-W(s)) + \sup_{[s,1]} \{(-e^{\gamma g(\gamma) }+1)(t-s) - T_+\Big|^t_s \biggr\} \geq g(\gamma)     \biggr\}.
\end{align*}
We define
\begin{align*}
A_1(t) &= \frac 1 {\sqrt \gamma} (W(t)- W(s))+ \frac 1 2 (-e^{\gamma g(\gamma)}+ 1)(t-s), \\
A_2(t) &=  -\frac 1 {\sqrt \gamma} (W_+(t)- W_+(s))+ \frac 1 2 (-e^{\gamma g(\gamma)}+ 1)(t-s) .
\end{align*}
It follows from the above arguments that
\begin{equation}
\P\biggl\{ \sup_{[s,1]}\{T-T_+\} \geq 2 g(\gamma)\biggr\} \nonumber \\
\leq \P\biggl\{ \sup_{[s,1]}A_1 \geq \frac 1 2 g(\gamma) \biggr\} + \P\biggl\{ \sup_{[s,1]}A_2 \geq \frac 1 2 g(\gamma)\biggr\}.
\end{equation}
By the strong Markov property,
\begin{align}
&\P\biggl\{ A_1 \geq \frac 1 2 g(\gamma)  \biggr\}
\leq \P\biggl\{\sup_{[0,1]} A_1 |_{s=0} \geq \frac 1 2 g(\gamma) \biggr\}  \nonumber \\
\leq &\P\biggl\{  \sup_{[0, \gamma ^{-k}]} A_1 |_{s=0} \geq \frac 1 2 g(\gamma)   \biggr\} + \P\biggl\{  \sup_{[ \gamma ^{-k},1]} A_1|_{s=0} \geq  \frac 1 2 g(\gamma)  \biggr\}  .  \nonumber
\end{align}
For the first term,
\begin{align}
\P\biggl\{ \sup_{[0, \gamma ^{-k}]} A_1 \geq \frac 1 2 g(\gamma) \biggr\}
\leq &\P\biggl\{ \sup_{[0, \gamma ^{-k}]} \frac 1 {\sqrt \gamma}  W(t) \geq \frac 1 2 g(\gamma)  \biggr\}  \notag \\
\leq &\frac{4 \gamma^{\frac {1+k} 2}}{\sqrt {2 \pi }g(\gamma)} e^{-\frac 1 2 g(\gamma) \gamma^{1+k}}. \nonumber 
\end{align}

For the second term,
\begin{equation}
\P\biggl\{ \sup_{[\gamma^{-k},1]}A_1 \geq \frac 1 2 g(\gamma)   \biggr\}  \nonumber \\
\leq \frac 4 { (e^{\gamma g(\gamma) }-1) \gamma^{\frac 1 2 -k}}  \exp\biggl\{ - \frac 1 8 \gamma^{1-2k} (e^{\gamma g(\gamma) }-1)^2 \biggr\}.
\end{equation}
Setting $ k = \frac 1 2,$ by our choice of $f,g,$ we have:
$$\P\biggl\{\sup_{[s,1]} A_1  \geq \frac 1 2 g(\gamma)\biggr\}\sim e^{-\gamma \infty}.$$
Similar arguments apply to $A_2 \sim e^{-\gamma \infty} $, and  
$$\P\{A, \sup_{[0,1]}\{T-T_+\} \geq 2 g(\gamma)\}$$
has an infinite convergence rate. Also,
$$\P\biggl\{A, \inf_{[0,1]}\{T-T_-\}\leq -2g(\gamma) \biggr\}$$
has an infinite convergence rate.
Therefore, we conclude that
$$ \P\{A,B^c\} \sim e^{-\gamma \infty}. $$
\end{proof}

\subsubsection{General Case}
We showed that the particles are almost interlaced by the bounds $f(\gamma), g(\gamma).$
We will prove a similar result for all $\{T_{n,k}; 1\leq k \leq n \leq N\}. $
We write $T_0, T_+, T_-, T$  as $T_{1,1}, T_{2,1}, T_{2,2}, T_{3,2}$,
 and let $f_1(\gamma)= f(\gamma), g_1(\gamma)= g(\gamma).$
We let
 \begin{align*}
 A_1 &= \biggl\{T_{2,1} - T_{1,1} \geq -f_1(\gamma),T_{1,1} - T_{2,2} \geq -f_1(\gamma)   \biggr\},   \\
 B_1 &= \biggl\{ T_{2,1}-T_{3,2} \geq 2 g(\gamma),T_{3,2}-T_{2,2} \geq 2 g(\gamma) \quad  \biggr\}, \\
A_1 &\subset  C_1 = \{T_{2,1} - T_{2,2} \geq -g_1(\gamma)\}.
\end{align*}
Then,
\begin{align*}
\mathbb{P}\{ A_1^c  \}&\sim e^{-\gamma \infty}, \\
\mathbb{P}\{C_1^c\}&\sim e^{-\gamma \infty}, \\
\mathbb{P}\{C_1 \cap B_1^c\}&\sim e^{-\gamma \infty}.
\end{align*}
For
\begin{align*}
A_n &= \{T_{n,k} - T_{n+1,k} \geq -f_n(\gamma), T_{n+1,k+1} - T_{n,k} \geq -f_n(\gamma), 1 \leq k \leq n\}, \\
A_n &\subset C_n = \{T_{n+1,k} - T_{n+1,k+1} \geq -g_n(\gamma), 1 \leq k \leq n\}.
\end{align*}

We find $A_n \subset C_n$ if we set $2f_n(\gamma) = g_n(\gamma).$
Let $g_{n}(\gamma) = 4^{n-1}g(\gamma)$, and
\begin{equation}
B_n = \{ T_{n+1,k}-T_{n,k}\geq -g_n(\gamma), T_{n,k} - T_{n+1, k+1}\geq - g_n(\gamma), 1 \leq k \leq n \}. \nonumber 
\end{equation}
By a similar argument as the previous subsection, we can show
\begin{equation}
\mathbb{P}\{C_n \cap B_n^c\} \sim e^{-\gamma \infty}. \nonumber 
\end{equation}
Thus, the case is true for $A_{n+1}.$ By induction, we have all the relations. 

Moreover, we can replace $f_n, g_n$ by some positive bound $\delta,$ and introduce the events
$$A_n(\delta), B_n(\delta), C_n(\delta).$$
Since $f_n(\gamma), g_n(\gamma) \to 0,$ as $\gamma \to \infty,$
 $A_n, B_n, C_n$ are still dominant, meaning that
\begin{align*}
&\P \{A_n(\delta)^c\} \sim e^{-\gamma \infty}, \\
&\P \{C_n(\delta)^c\} \sim e^{-\gamma \infty},  \\
&\P \{C_n(\delta) \cap B_n(\delta)^c\} \sim e^{-\gamma \infty}.
\end{align*}

\subsection{Proof for the Main Theorem}
\label{main_theorem}
Similar to the arguments in previous subsections, we first show the result for the four particles.

\subsubsection{The Case for four Particles}
Recall the four particles we have in the previous section.
\begin{align*}
dT_0 &= \frac{1}{\sqrt \gamma} dW_0, \\
dT_+ &= \frac{1}{\sqrt \gamma} dW_+ + e^{\gamma(T_0 - T_+)} dt, \\
dT_- &= \frac{1}{\sqrt \gamma} dW_- - e^{\gamma(T_- - T_0)}dt,\\
dT &=  \frac{1} {\sqrt \gamma} dW + (e^{\gamma(T_- - T)} - e^{\gamma(T- T_+)})dt.
\end{align*}

It is even easier to take the first three particles into account, the rate function of which is easily computed.
Since by Lemma (\ref{lem3.1}) and (\ref{lem3.2}), we have shown that:
$T_{\pm}$ is exponentially equivalent to
\begin{equation}
\label{3.1}
\widetilde T_{\pm}(t) = T_{\pm}(0) + \frac 1 {\sqrt \gamma}W_{\pm}(t)  \pm \sup_{[0,t]} (\pm(T_0(s) - \frac 1 {\sqrt \gamma}W_{\pm}(s) - T_{\pm}(0)))_+.\nonumber 
\end{equation}
It is easy to show that the map from $ W_0,W_{\pm} $ to $T_0,\widetilde T_{\pm}$ is continuous from $(C_0([0,1]), \mathcal{L}_\infty)$ to itself.
Therefore, we can apply the contraction principle.

The difficulty lies in $T$. We will work on the limit of the probability
\begin{equation}
 \P\{\|T_{0, \pm} - \phi_{0,\pm}\|\leq \delta, \|T- \phi\|\leq \delta \},\nonumber 
\end{equation}
when $\gamma \to \infty,$ and $\delta \to 0.$
While its upper bound for the probability above is similar and even easier to get, we adopt the following strategies for the lower bound.

First, we note that the above equation is no smaller than
\begin{equation}
\lim_{\delta \to 0} \lim_{\delta_1 \to 0} \lim_{\gamma \to \infty} \P\{\|T_{0, \pm} - \phi_{0,\pm}\|\leq \delta_1, \|T- \phi\|\leq \delta \}.\nonumber 
\end{equation}
We consider $$\eta = \delta_2 - \delta_1 - \epsilon >0.$$

\subsubsection{Step 1: Fixing Intervals:}
We have previously proved that
\begin{align*}
A &= \biggl\{T_+ - T_0 \geq -\epsilon,T_0 - T_- \geq -\epsilon \;  \biggr\},   \\
B &= \biggl\{ T_+ - T \geq -\epsilon,T - T_- \geq -\epsilon \;  \biggr \}, \\
A &\subset  C = \biggl\{T_+ - T_- \geq -2\epsilon \;   \biggr\},
\end{align*}
$
\P (A^c) \sim e^{- \gamma \infty},
\P (C^c)  \sim e^{- \gamma \infty},
\P (C, B^c) \sim e^{-\gamma \infty}.
$
Hence these events $A,B,C$ are `dominant' meaning that the probabilities of $A,B,C$ will converge to one as $\epsilon \to 0.$

Consider the time interval
\begin{equation}
\label{3.3.3}
P\doteq \{t: \phi_+ - \eta \leq \phi \leq \phi_- + \eta\},
\end{equation}
which is a closed set. 
Its compliment in $(0,1),$ which is an open set, and thus a union of countable intervals: $P^c \doteq \cup_{n \in J} I_n.$
Denote $I_n = (a_n, b_n).$

Meanwhile, for $t \in P,$ we have
\begin{align*}
T(t) &> T_-(t) - \epsilon \geq \phi_-(t) -\delta_1 - \epsilon \geq \phi(t) -\delta_1 - \epsilon -\eta  =\phi(t) - \delta_2,\\
T(t) &< T_+(t) + \epsilon \leq \phi_+(t) +\delta_1 + \epsilon \leq \phi_+(t) +\delta_1 + \epsilon +\eta = \phi(t) + \delta_2.
\end{align*}
In other words, $|T- \phi|\leq \delta_2 < \delta$ holds on this part, and therefore we only need to consider $I_n$ for each $n$. We obtain the lower bound for the probability
\begin{align}
&\P\biggl\{\|T_{0, \pm} - \phi_{0,\pm}\|\leq \delta_1, \|T- \phi\|_{\cup I_n(\eta)}  \leq \delta \biggr\} \notag \\
&\geq \P\biggl\{A_\epsilon, \|T_{0, \pm} - \phi_{0,\pm}\|\leq \delta_1, \|T- \phi\|_{\cup I_n(\eta)}\leq \delta \biggr\} \nonumber   \\
\label{3.3.1}
&\geq \P\biggl\{A_\epsilon, \|T_{0, \pm} - \phi_{0,\pm}\|\leq \delta_1, \|T- \phi - (T-\phi)(a_n)\|_{ I_n(\eta)}\leq \delta- \delta_2, n \in J \biggr\} .
\end{align}
Then we replace $T - T(a_n)$ by $\widetilde T _n$. 
Next, we replace $\widetilde T _n$ by
\begin{equation}
\widetilde T _n(\phi) = dW_n + (e^{\gamma(\phi_- - T_n)} - e^{\gamma(T_n - \phi_+)})dt \nonumber .
\end{equation}
Note that
\begin{equation}
|\widetilde T _n - \widetilde T _n (\phi)| \leq 2\delta_1.\nonumber 
\end{equation}
We then have
\begin{align}
&\P\biggl\{\|T_{0, \pm} - \phi_{0,\pm}\|\leq \delta_1, \|T- \phi\|_{\cup I_n(\eta)}  \leq \delta \biggr\} \notag \\
&\geq \P\biggl\{A_\epsilon, \|T_{0, \pm} - \phi_{0,\pm}\|\leq \delta_1, \|\widetilde T _n (\phi_{\pm}) - (\phi-\phi(a_n))\|_{ I_n(\eta)}\leq \delta - 2 \delta_1 - \delta_2, n \in J \biggr\} \notag \\
&\geq \P\biggl\{A_\epsilon, \|T_{0, \pm} - \phi_{0,\pm}\|\leq \delta_1\}
\P\{\|\widetilde T _n (\phi_{\pm}) - (\phi-\phi(a_n))\|_{ I_n(\eta)}\leq \delta - 2 \delta_1 - \delta_2, n \in J \biggr\} \notag \\
&\geq \P\biggl\{A_\epsilon, \|T_{0, \pm} - \phi_{0,\pm}\|\leq \delta_3 \}
\P\{\|\widetilde T _n (\phi_{\pm}) - (\phi-\phi(a_n))\|_{ I_n(\eta)}\leq \delta_3, n \in J \} \nonumber .
\end{align}

We choose $\delta_3 < \delta - 2 \delta_1 - \delta_2$ and $\delta_3 < \delta_1$, independent of $\eta,$ so that we can set $\delta_3 \to 0 $ while retaining $\eta.$
The upper bound is derived to be
\begin{align}
&\P\biggl\{\|T_{0, \pm} - \phi_{0,\pm}\|\leq \delta, \|T- \phi\|\leq \delta \biggr\} \notag \\
\label{3.3.2}
&\leq \P\biggl\{\|T_{0, \pm} - \phi_{0,\pm}\|\leq 2\delta, \|T- \phi - (T- \phi)(a_n)\|_{\cup I_n(\zeta)}\leq 2\delta\biggr\}. 
\end{align}
Similarly,
\begin{align}
&\P\biggl\{\|T_{0, \pm} - \phi_{0,\pm}\|\leq 2\delta, \|\widetilde T _n(\phi_{\pm})- (\phi - \phi(a_n))\|_{\cup I_n(\zeta)}\leq 4\delta \biggr\} \notag \\
&= \P\biggl\{\|T_{0, \pm} - \phi_{0,\pm}\|\leq 2\delta\} \P\{\|\widetilde T _n(\phi_{\pm})- (\phi - \phi(a_n))\|_{\cup I_n(\zeta)}\leq 4\delta \biggr\}.\nonumber 
\end{align}

Note that for the upper and lower bounds, (\ref{3.3.1}) and (\ref{3.3.2}) are similar in the sense that $\eta$ and $\delta_1$ are independent and $\zeta$ and $2\delta$ are independent. Thus, in the following subsections, we can first set $\delta_1,2\delta \to 0$ and then set $\eta, \zeta \to 0$.

\subsubsection{Step 2: Exponentially equivalence and contraction principle}

By Lemmas \ref{lem3.1} and \ref{lem3.2}, we can replace $T_{\pm}$ due to $\widetilde T_{\pm}$ by the exponential equivalence.
Thus, we can replace $T$ by some simpler $T_n$ on each $I_n \subset P^c.$ 
Recall the definition of $P$, which is in $(\ref{3.3.3})$
 on $I_n,$ we have
$\phi_- - \eta > \phi$ or $\phi > \phi_+ + \eta.$
Therefore, guaranteed by the previous lemmas,
we construct the exponentially equivalent $\widetilde T_{n,0},$ which is
\begin{equation}
\widetilde T_{n,0} (t) - \widetilde T_n(a_n) =  \frac 1 {\sqrt \gamma}W (a_n) + \sup_{[a_n,t]} (T_- - \frac 1 {\sqrt \gamma}W )_+; \nonumber 
\end{equation}
if $\phi > \phi_+ + \eta,$ on $[a_n, b_n].$
\begin{equation}
\widetilde T_{n,0} (t) - \widetilde T_n(a_n) =  \frac 1 {\sqrt \gamma}W (a_n) - \sup_{[a_n,t]} (- T_+ + \frac 1 {\sqrt \gamma}W )_+; \nonumber 
\end{equation}
if $\phi < \phi_- - \eta,$ on $[a_n, b_n]$.

Suppose that we replace $T_-$ and $T_+$ by $\phi_-$ and $\phi_+$. We define $\widetilde T_n$ as
\begin{equation}
\widetilde T_n (t) - \widetilde T_n(a_n) =  \frac 1 {\sqrt \gamma}W (a_n) + \sup_{[a_n,t]} (T_- - \frac 1 {\sqrt \gamma}W )_+; \nonumber 
\end{equation}
if $\phi > \phi_+ + \eta,$ on $[a_n, b_n],$ and 
\begin{equation}
\widetilde T_n (t) - \widetilde T_n(a_n) =  \frac 1 {\sqrt \gamma}W (a_n) - \sup_{[a_n,t]} (- T_+ + \frac 1 {\sqrt \gamma}W )_+; \nonumber 
\end{equation}
if $\phi < \phi_- - \eta,$ on $[a_n, b_n]$.
Next, we will show that  
\begin{equation}
\label{norm_comparison}
\| \widetilde T_{n,0} - \widetilde T_n\|_{[a_n,b_n],\infty}
\leq \| \sup_{[a_n,t]} (- T_\pm + \frac 1 {\sqrt \gamma}W )_+ -  \sup_{[a_n,t]} (- \phi_\pm + \frac 1 {\sqrt \gamma}W )_+ \|_{\infty}
\leq \sup_{[a_n,b_n]} \| T_\pm - \phi_\pm \|_{\infty} .\nonumber 
\end{equation}
There exist $t_1, t_2 \leq t$ such that
\begin{align*}
&\sup_{[a_n,t]} (- T_\pm + \frac 1 {\sqrt \gamma}W )_+ =  (- T_\pm + \frac 1 {\sqrt \gamma}W )_+(t_1), \\
&\sup_{[a_n,t]} (- \phi_\pm + \frac 1 {\sqrt \gamma}W )_+ =  (- \phi_\pm + \frac 1 {\sqrt \gamma}W )_+(t_2) .
\end{align*}
By symmetry, we assume $t_1 \geq t_2$. If the first term is larger, i.e.,
$$
\sup_{[a_n,t]} (- T_\pm + \frac 1 {\sqrt \gamma}W )_+ \geq \sup_{[a_n,t]} (- \phi_\pm + \frac 1 {\sqrt \gamma}W )_+,
$$
then
\begin{align*}
\biggl\| \sup_{[a_n,t]} (- T_\pm + \frac 1 {\sqrt \gamma}W )_+ -  \sup_{[a_n,t]} (- \phi_\pm + \frac 1 {\sqrt \gamma}W )_+ \biggr\|_{\infty}
&= T_\pm(t_1) - \phi_\pm(t_2)
\leq T_\pm(t_1) - \phi_\pm(t_1) \\
&\leq \sup_{[a_n,t]} \biggl\| T_\pm - \phi_\pm \biggr\|_{\infty}.
\end{align*}
If the second term is larger, i.e.,
$$
\sup_{[a_n,t]} (- T_\pm + \frac 1 {\sqrt \gamma}W )_+ \leq \sup_{[a_n,t]} (- \phi_\pm + \frac 1 {\sqrt \gamma}W )_+,
$$
then 
\begin{align*}
\biggl\| \sup_{[a_n,t]} (- T_\pm + \frac 1 {\sqrt \gamma}W )_+ -  \sup_{[a_n,t]} (- \phi_\pm + \frac 1 {\sqrt \gamma}W )_+ \biggr\|_{\infty}
&= \phi_\pm(t_2) - T_\pm(t_1) \\
&\leq \phi_\pm(t_2) - T_\pm(t_2) \\
&\leq \sup_{[a_n,t]} \biggl\| T_\pm - \phi_\pm \biggr\|_{\infty}.
\end{align*}
Thus we proved (\ref{norm_comparison}).

Based on the above, for $\delta_1 < \delta_2 < \delta,$ we have:
\begin{align*}
& \P\{\|T_{0, \pm} - \phi_{0,\pm}\|\leq \delta, \|\widetilde T- \phi\|_{\cup I_n}\leq \delta \}  \notag \\
\geq &
\P\{\|T_0 - \phi_0\|\leq \delta_1, \|\widetilde T_{\pm} - \phi_{\pm}\|\leq \delta_2,\|\widetilde T_{n,0}- \phi\|\leq \delta_3, n \in J\} \notag \\
\geq &
\P\{\|T_0 - \phi_0\|\leq \delta_1, \|\widetilde T_{\pm} - \phi_{\pm}\|\leq \delta_2 - \delta_1,\|\widetilde T_n- \phi\|\leq \delta - \delta_2, n \in J\} \notag . \\
\end{align*}
Therefore, we can apply the contraction principle for the mapping:
$$\{W_0, W_{\pm}, W\} \to \{T_0, \widetilde T_{\pm}, \widetilde T_n , n\in J\}$$
and obtain
\begin{align}
&\lim_{\delta' \to 0} \lim_{\gamma \to \infty}
\P\biggl\{\|T_{0, \pm} - \phi_{0,\pm}\|\leq \delta', \|T- \phi\|_{\cup I_n}\leq \delta' \biggr\}  \notag \\
= &\lim_{\delta' \to 0} \lim_{\gamma \to \infty}
\P\biggl\{\|T_0 - \phi_0\|\leq \delta', \|\widetilde T_{\pm} - \phi_{\pm}\|\leq \delta',\|T_n- \phi\|\leq \delta', n \in J\biggr\} \notag \\
= &- \frac 1 2 \biggl(\int_0^1 \dot{\phi_0}^2
+ \int_{\phi_+ > \phi_0} \dot{\phi_+}^2 + \int_{\phi_+ = \phi_0} (\dot{\phi_+}_+)^2
+ \int_{\phi_- > \phi_0} \dot{\phi_-}^2 + \int_{\phi_- = \phi_0} (\dot{\phi_-}_-)^2  \notag \\
&+ \int_{\{\phi_- < \phi < \phi_+\} \cap \cup I_n(\eta)}  \dot{\phi}^2
+ \int_{\{\phi_- = \phi < \phi_+\}  \cap \cup I_n(\eta)} \dot{\phi_-}^2
+ \int_{\{\phi_- < \phi = \phi_+\}  \cap \cup I_n(\eta)} \dot{\phi_+}^2\biggr)  \notag \\
= &I(\phi) + I(\phi_+) + I(\phi_-)+ I(\phi,\eta).
    \end{align}
Since the above equation holds for all $\eta >0,$ $\cup I_n(\eta)$
would be the complement of $\{\phi_- = \phi =\phi_+\}.$
If we set $\eta \to 0,$
we have
\begin{equation}
\lim_{\eta \to 0}I(\phi,\eta) = I(\phi) =
\int_{\{\phi_- < \phi < \phi_+\} }  \dot{\phi}^2
+ \int_{\{\phi_- = \phi < \phi_+\}} \dot{\phi_-}^2
+ \int_{\{\phi_- < \phi = \phi_+\}} \dot{\phi_+}^2.
\end{equation}
Therefore, 
\begin{align}
\liminf_{\delta \to 0} \lim_{\gamma \to \infty} \P
\geq I(\phi) + I(\phi_+) + I(\phi_-)+ I(\phi).
\end{align}
Thus, the lower bound is proved.

The upper bound is similar to obtain and it is the same as the lower bound. Therefore, it is the rate function. This is a good rate function, because by the Schilder's Theorem and contraction principle. We have proved our result for the four-particle case.

\subsubsection{General Case}
When $n = 3$, the proof in the four-particle case still applies. In general, we can prove the results recursively.

\section{Conclusion and Future Remarks} \label{sec_con}
In this work, we presented a large deviation principle for the Whittaker 2d growth model. 
An interesting future problem is to emulate the analysis to address the Whittaker 2d growth model with more general functions instead of the current exponential function. 
Different from Freidlin-Gartner formula, our rate function depends on the position of sample paths. This is due to the interactions of interlaced particles. 
In Appendix~\ref{sec_example}, we elaborate on two special cases, namely the Crossing and Strict Interlacing of sample paths, where the proof can be simplified.
In Appendix~\ref{sec_existence}, we show the existence and uniqueness of the strong solution for technical completeness.


\section*{Acknowledgment}
The authors sincerely thank Amir Dembo for his advice on the topic. The authors are also grateful to Tianyi Zheng, Mykhaylo Shkolnikov, Vadim Gorin, and Ivan Corwin for helpful discussions.

\appendix

\section{Crossing or Strict Interlacing Case}
\label{sec_example}
We calculate the rate functions for two special cases.
The crossing case is where the positions of the sample paths are reversed and cause the drift term of Whittaker 2d growth model to explode with the scaling factor $\gamma.$
The strict interlacing case is where the positions of the sample paths are interlaced and cause the drift term to distinguish exponentially with $\gamma.$

\subsection{Crossing Case}
\begin{thm}[Infinite Convergence Rate]
Assume that there exist $n,k$ and $t\in [0,1]$ such that
$\phi_{n,k}(t) > \phi_{n,k+1}(t).$
Then the scaled system at the beginning of Section~\ref{sec_main} 
has an infinite rate function as $\gamma \to \infty,$ or equivalently, 
$$\frac{1}{\gamma} \lim_{\delta \to 0} \lim_{\gamma \to \infty} \log  \mathbb{P}\bigl\{\|T_{n,k} -\phi_{n,k}\|\leq \delta, 1 \leq k \leq n \leq N\bigr\} = -\infty. $$

\end{thm}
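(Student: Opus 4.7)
The plan is to show that a strict failure of the interlacing required by \eqref{eq_sample_paths} forces, on an interval of positive length, one of the exponential drift terms in the Whittaker SDE to be of order $e^{\gamma c}$ for some $c>0$. Because pinning the trajectories $T_{m,\ell}$ within $\delta$ of their targets $\phi_{m,\ell}$ leaves only a $\gamma$-independent amount of room on the integrated SDE's left-hand side, the Brownian increment must absorb an $e^{\gamma c/2}$-sized quantity; the Gaussian tail then kills this probability faster than any $e^{-\gamma M}$.

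First I reduce to a clean one-sided violation of interlacing. The hypothesis at time $t$ is incompatible with \eqref{eq_sample_paths}; by continuity of the $\phi_{m,\ell}$ on $[0,1]$ there exist $a<b$, an index pair $(m,\ell)$, and $c>0$ for which either
\begin{equation*}
\phi_{m-1,\ell}(s)-\phi_{m,\ell}(s)\ \ge\ c \quad\text{on } [a,b],
\end{equation*}
or
\begin{equation*}
\phi_{m,\ell}(s)-\phi_{m-1,\ell-1}(s)\ \ge\ c \quad\text{on } [a,b],
\end{equation*}
while the \emph{other} interlacing inequality for $(m,\ell)$ remains strict in the natural direction on $[a,b]$. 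If instead both inequalities around $(m,\ell)$ reverse at a common time, then by transitivity $\phi_{m-1,\ell}>\phi_{m-1,\ell-1}$ there, i.e., a hypothesis of the same form on level $m-1$; recursing downward at most $N-1$ times reaches a one-sided reversal, since at level $2$ only one of the two interlacing inequalities involving $\phi_{1,1}$ can be reversed at a given time. Boundary particles ($\ell=1$ or $\ell=m$) carry only a single drift term and are strictly simpler.

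WLOG assume the first case. On the event $E_\delta=\bigcap_{m',\ell'}\{\|T_{m',\ell'}-\phi_{m',\ell'}\|_\infty\le\delta\}$ with $4\delta<c$, I have $T_{m-1,\ell}(s)-T_{m,\ell}(s)\ge c/2$ and $T_{m,\ell}(s)-T_{m-1,\ell-1}(s)\le -c'$ on $[a,b]$ for some $c'>0$, so the subtracting drift in the SDE for $T_{m,\ell}$ is at most $(b-a)e^{-\gamma c'}$. Integrating the SDE on $[a,b]$ and using that the LHS $T_{m,\ell}(b)-T_{m,\ell}(a)$ is bounded on $E_\delta$ by a $\gamma$-independent constant $K$, I obtain, for $\gamma$ large,
\begin{equation*}
\tfrac{1}{\sqrt{\gamma}}\bigl|W_{m,\ell}(b)-W_{m,\ell}(a)\bigr|\ \ge\ \tfrac{1}{2}(b-a)\,e^{\gamma c/2}.
\end{equation*}
The standard Gaussian tail for the Brownian increment then yields $\mathbb{P}(E_\delta)\le 2\exp\bigl(-\tfrac{1}{8}(b-a)\gamma\, e^{\gamma c}\bigr)$, so $\gamma^{-1}\log\mathbb{P}(E_\delta)\to-\infty$, and this is preserved on letting $\delta\to 0$.

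The main obstacle I anticipate is the bookkeeping of the reduction to a one-sided violation: ensuring the downward recursion terminates without circularity, and correctly enumerating boundary cases on the triangular lattice. Once that is in hand, the integration-of-SDE estimate combined with the Gaussian tail is robust and applies uniformly across all interior and boundary templates, giving the claimed infinite rate as required.
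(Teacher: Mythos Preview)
Your approach is essentially the same as the paper's: both proofs (i) reduce to an index pair where one interlacing inequality fails by a fixed margin on a subinterval while the companion inequality holds strictly (or the particle is a boundary one with a single drift), (ii) observe that on the pinning event one exponential drift is $\ge e^{\gamma c/2}$ while the other is $O(e^{-\gamma c'})$, and (iii) integrate the SDE so that the Brownian increment must absorb an $e^{\gamma c/2}$-sized term, giving a super-exponential Gaussian tail. Your reduction is in fact written more carefully than the paper's, which is terse and contains several typos in the $\phi_{n,k\pm}$ bookkeeping; the underlying recursion (pass to level $m-1$ via transitivity when both inequalities at $(m,\ell)$ are reversed) is identical in spirit.

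One small correction worth making: your terminal claim that ``at level $2$ only one of the two interlacing inequalities involving $\phi_{1,1}$ can be reversed at a given time'' is not literally true (both $\phi_{2,2}\le\phi_{1,1}$ and $\phi_{1,1}\le\phi_{2,1}$ can fail simultaneously). The correct reason the recursion terminates there is that $T_{2,1}$ and $T_{2,2}$ are boundary particles carrying a single drift term each, so any reversal at level $2$ is automatically ``one-sided'' in the sense your estimate needs. You already note that boundary particles are strictly simpler, so this is only a wording issue, not a gap. It would also strengthen the write-up to make explicit the dichotomy driving the recursion: on any subinterval where one inequality fails by $\ge c$, either there is a further subinterval where the companion inequality holds strictly (stop), or by continuity the companion inequality fails non-strictly on the whole subinterval, forcing a strict same-row violation one level up (recurse).
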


\begin{proof}
If we have $\phi_{n,k}(t) > \phi_{n,k-}(t),$ or $\phi_{n,k-}$ does not exist,
we can prove the infinite convergence rate.
We focus on the case where
for all $t$ it holds that $\phi_{n,k}(t) > \phi_{n,k+}(t),$
$\phi_{n,k-}(t) \geq \phi_{n,k}(t) > \phi_{n,k+}(t).$
However, if this is the case, consider $\phi_{n-2, k-1}$,
we have $\phi_{n-1,k-1}(t)< \phi_{n-2,k-1}(t),$ or
$\phi_{n-2,k-1}(t)< \phi_{n-1,k}(t). $
So we can again apply the above arguments until $T_{n,k}$ hit the boundary,  meaning that one of $T_{n,k\pm}$ does not exist. This will reduce to the following simple case.

We consider $\phi_{n,k}(t) - \phi_{n,k+}\geq \eta,$
and $\phi_{n,k} - \phi_{n,k}\geq \eta, $ on the interval $I \subset [0,1].$
Recall that $\|T_{n,k} - \phi_{n,k}\| \leq \delta, $ for all $n,k$.
If we choose $\delta $ small enough, such that
$T_{n,k} - T_{n,k+} \geq \phi_{n,k} - \phi_{n,k+} - 2 \delta \geq \eta - 2 \delta >0.$
and $T_{n,k-} -  T_{n,k}\leq -\eta + 2\delta <0, $
then
\begin{align*}
&\P\biggl\{\|T_{n,k} -\phi_{n,k}\|\leq \delta, 1 \leq k \leq n \leq N\biggr\}  
\leq \P\biggl\{T_{n,k} -\phi_{n,k}\leq \delta, \quad \textrm{on } [a,b]\biggr\} \nonumber \\
&\leq \P\biggl\{T_{n,k}(a) + \frac 1 {\sqrt \gamma} (W_{n,k}(t) - W_{n,k}(a))+ (t-a)(e^{\gamma(\eta -2 \delta)}-1) f\leq \phi(t)+\delta,  \biggr\},
\end{align*}
which has an infinite convergence rate.
\end{proof}

\subsection{Strict Interlacing Case }
For the strict interlacing case, the main theorem \ref{mainthm} still applies. However, we give a succinct proof here just for intuition.
For $\{\phi_{n,k}\}$ which are strictly interlaced, i.e.,
$$\phi_{n-1,k-1}\geq \phi_{n,k}\geq \phi_{n-1,k},$$on $[0,1]$ for al $1\leq k \leq n \leq N$,
if $\phi_{n-1,k-1}$ or $\phi_{n-1,k}$ does not exist, the inequality disappears automatically.
We obtain the rate function by proving the following. 

\begin{thm}[Non-Intersecting Case]
\begin{eqnarray*}
\lim_{\delta \to 0}\lim_{\gamma \to \infty}
\frac{1}{\gamma} \log
\P\biggl\{\|T_{n,k}- \phi_{n,k}\|\leq \delta; 1\leq k \leq n \leq N \biggr\}
= \sum_{1 \leq k \leq n \leq N} I(\phi_{n,k}).
\end{eqnarray*}
where,
\begin{eqnarray*}
I(\phi_{n,k})= \left\{ \begin{array}{ll}
 \frac 1 2 \int_0^1 {\dot \phi_{n,k}}^2
& \quad \textrm{if $\phi_{n,k} \in \mathcal{AC}$ and $\phi_{n,k}(0)=T_{n,k}(0);$}\\
 +\infty & \quad\textrm{otherwise.}
\end{array}
\right.
\end{eqnarray*}
\end{thm}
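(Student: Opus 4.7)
The plan is to exploit the uniform gap that strict interlacing provides. By continuity of each $\phi_{n,k}$ on the compact interval $[0,1]$ together with the strict inequalities $\phi_{n-1,k-1}(t) > \phi_{n,k}(t) > \phi_{n-1,k}(t)$ (interpreted only when the upper neighbor exists), there is some $\eta_0 > 0$ so that both separations exceed $\eta_0$ uniformly in $t$. I would fix any tolerance $\delta \in (0, \eta_0/3)$ and study the tube event $E_\delta \defeq \{\|T_{n,k} - \phi_{n,k}\|_\infty \leq \delta,\ 1 \leq k \leq n \leq N\}$. On $E_\delta$, the triangle inequality forces $T_{n-1,k-1}(t) - T_{n,k}(t) \geq \eta_0/3$ and $T_{n,k}(t) - T_{n-1,k}(t) \geq \eta_0/3$ throughout $[0,1]$, so every exponential appearing in the scaled drift is bounded in absolute value by $e^{-\gamma\eta_0/3}$, which is superexponentially small in $\gamma$.

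Next I would introduce the decoupled approximations $\widetilde T_{n,k}(t) \defeq T_{n,k}(0) + \frac{1}{\sqrt{\gamma}}W_{n,k}(t)$, which are independent scaled Brownian motions shifted by the prescribed initial positions. Exponential equivalence between $\{T_{n,k}\}$ and $\{\widetilde T_{n,k}\}$ follows from a stopping-time truncation: with $\tau \defeq \inf\{t : |T_{n,k}(t) - \phi_{n,k}(t)| > \delta \text{ for some }(n,k)\}$, integrating the drift bound gives $\sup_{t \leq \tau \wedge 1}|T_{n,k}(t) - \widetilde T_{n,k}(t)| \leq 2 e^{-\gamma\eta_0/3}$. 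This yields the forward inclusion $E_\delta \subset \{\|\widetilde T_{n,k} - \phi_{n,k}\|_\infty \leq \delta + 2e^{-\gamma\eta_0/3},\ \forall n,k\}$, and a symmetric argument starting inside the $\widetilde T$-tube gives the matching reverse inclusion after a $\delta$-perturbation, so the two tube events are exponentially equivalent.

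With the equivalence in hand, the problem reduces to a joint LDP for the independent scaled Brownian motions $\{\tfrac{1}{\sqrt{\gamma}}W_{n,k}\}$. Schilder's Theorem together with the contraction principle applied to the continuous shift $W_{n,k} \mapsto \widetilde T_{n,k}$ gives rate $\tfrac{1}{2}\int_0^1 \dot\phi_{n,k}^2\,ds$ when $\phi_{n,k} \in \mathcal{AC}$ with $\phi_{n,k}(0) = T_{n,k}(0)$, and $+\infty$ otherwise. Independence of the driving Brownian motions gives additivity:
\begin{equation*}
\lim_{\delta \to 0}\lim_{\gamma \to \infty}\frac{1}{\gamma}\log\P(E_\delta) = -\sum_{1\leq k\leq n\leq N}\frac{1}{2}\int_0^1 \dot\phi_{n,k}^2\,ds,
\end{equation*}
which matches the claimed rate function (up to the usual sign convention of the LDP).

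The main obstacle is the exponential equivalence step, specifically the reverse inclusion: showing that if $\widetilde T$ stays in a slightly smaller tube around $\phi$, then $T$ is forced to stay in the full tube $E_\delta$. One has to bootstrap the stopping-time estimate on the $T$-side, using that the integrated drift is at most $2e^{-\gamma\eta_0/3}$ as long as $T$ has not yet exited, to argue that exit is impossible before time $1$. Compared with the general setting of Subsection \ref{main_theorem}, the present case is clean because $\eta_0$ is bounded away from zero, so the drifts can never activate the reflection-type behavior captured by Lemma \ref{lem3.3} at coincidence points; that is precisely why the rate function here reduces to the plain Brownian-motion form $\tfrac{1}{2}\int \dot\phi^2\,ds$ with no boundary correction.
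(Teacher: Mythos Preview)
Your proposal is correct and follows essentially the same route as the paper: use compactness and strict interlacing to get a uniform gap $\eta$, take $\delta<\eta/3$ so that on the tube event every drift term is bounded by $e^{-\gamma(\eta-2\delta)}$, compare $T_{n,k}$ with the driftless process $T_{n,k}(0)+\gamma^{-1/2}W_{n,k}$, and finish with Schilder and independence. Your stopping-time treatment of the reverse inclusion is a bit more explicit than the paper's two-line sandwich $\P\{\|\gamma^{-1/2}W_{n,k}+T_{n,k}(0)-\phi_{n,k}\|\le \delta/2\}\le \P(E_\delta)\le \P\{\|\gamma^{-1/2}W_{n,k}+T_{n,k}(0)-\phi_{n,k}\|\le 3\delta/2\}$, but the content is the same.
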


\begin{proof}
Since all the inequalities are strict on $[0,1],$ there is a $\eta$ such that
$$\phi_{n-1,k-1}- \phi_{n,k} \geq \eta ,$$
$$\phi_{n,k} - \phi_{n-1,k}\geq \eta,$$ on $[0,1]. $
then we choose $\delta < \frac \eta 3,$
\begin{align*}
\bigl|e^{\gamma(T_{n,k-} - T_{n,k})} - e^{\gamma(T_{n,k}-T_{n,k+})}\bigr| 
& \leq e^{\gamma(T_{n,k-} - T_{n,k})} + e^{\gamma(T_{n,k}-T_{n,k+})} \nonumber \\
& \leq e^{\gamma(\phi_{n,k-} - \phi_{n,k}+ 2\delta) } +  e^{\gamma(\phi_{n,k}-\phi_{n,k+} + 2\delta)} \nonumber \\
& \leq e^{\gamma(-\eta + 2\delta) } +  e^{\gamma(-\eta + 2\delta)}  \to 0, 
\end{align*}
as $\gamma \to \infty.$
Therefore, since  $\eta$ is fixed, for $\delta$ small enough, we can find $\gamma $ large enough that 
$$ |e^{\gamma(T_{n,k-} - T_{n,k})} - e^{\gamma(T_{n,k}-T_{n,k+})}| \leq \frac \delta 2.
$$
Based on the above, we have following two inequalities:
\begin{align*}
&\P\biggl\{  \|T_{n,k}-\phi_{n,k}\|\leq \delta  \biggr\} 
\geq \P\biggl\{\|\frac 1 {\sqrt \gamma}W_{n,k}(t) -T_{n,k}(0) - \phi_{n,k} \|\leq \frac \delta 2\biggr\}. \\
&\P\biggl\{  \|T_{n,k}-\phi_{n,k}\|\leq \delta \biggr \} 
\leq \P \biggl\{\|\frac 1 {\sqrt \gamma}W_{n,k}(t) -T_{n,k}(0) - \phi_{n,k} \|\leq \frac 3 2 \delta \biggr\}.
\end{align*}
Denote the right-hand sides of the above two inequalities as $R_1(\gamma,\delta) $ and $R_2(\gamma,\delta)$, respectively. 
By the Freidlin-Weizell Theorem,
$$\lim_{\delta \to 0}\lim_{\gamma \to \infty} -\frac{1}{\gamma} \log R_1(\gamma, \delta) = \lim_{\delta \to 0}\lim_{\gamma \to \infty} -\frac{1}{\gamma} \log R_2(\gamma, \delta) = \sum_{1 \leq k \leq n \leq N} I(\phi_{n,k}).$$
The above gives an upper bound for the rate function. A similar arguments can be made to show that the lower bound is the same, which concludes the proof.
\end{proof}

\section{Existence and Uniqueness of the Strong Solution} \label{sec_existence}
\label{sec_existence}
To prove the existence of the strong solution, we consider the general formula 
\begin{equation}
\label{2.1}
dT_{k,j}= dW_{k,j} +(e^{T_{k-1,j}-T_{k,j}}-e^{T_{k,j}-T_{k-1,j-1}})dt,
\end{equation}
for $ 1 \leq j \leq k $,
where the terms vanish when certain indices become zero or negative.

\begin{remark}
The existence and uniqueness of the strong solution for $T_{k,1} $ and $T_{k,k}$ with $1 \leq k \leq N$ can be directly computed as
\begin{equation}
dT_{k,1} = dW_{k,1} + e^{T_{k-1,1} - T_{k,1}}dt,
\end{equation}
which is equivalent to the ordinary differential equation
\begin{equation}
\frac {d} {dt} S_{k,1} =  e^{S_{k-1,1} - S_{k,1}},
\end{equation}
where $S_{k-1,1} = T_{k-1,1} - W_{k,1}$, $ S_{k,1} = T_{k,1} - W_{k,1}.$

By solving the ODE, we have the unique strong solution:
\begin{equation}
T_{k,1}(t) = T_{k,1}(0) + W_{k,1}(t) + \log\biggl\{ 1+ \int_{0}^t e^{T_{k-1,1}(s)- W_{k,1}(s)- T_{k,1}(0)}ds  \biggr\}.
\end{equation}

Since $T_{1,1}$ is a Brownian motion, we can obtain the solutions of $T_{k,1}$  for all $k$ recursively.
The computation for $T_{k,k}$ is similar.
The proof given below with minor modifications is a rigorous proof for $T_{k,1}$ and $T_{k,k}$.

\end{remark}

\begin{thm}[Existence and Uniqueness of the Strong Solution]
\label{existence}
For the stochastic differential system, the strong solution exists and is unique.
\end{thm}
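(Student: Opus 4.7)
The plan is to exploit the one-directional, layer-by-layer structure of the system and proceed by induction on the row index $k$. The base case $k=1$ is immediate because $T_{1,1}(t)=T_{1,1}(0)+W_{1,1}(t)$ is just a Brownian motion. For the induction step, I will assume continuous strong solutions $\{T_{k-1,j}\}_{1\le j\le k-1}$ have already been constructed on $[0,1]$ and observe that the drift in row $k$ depends only on row $k-1$ and on $T_{k,j}$ itself. Hence the $k$ equations in row $k$ decouple from one another once row $k-1$ is fixed, and each reduces to a scalar SDE driven by a single Brownian motion $W_{k,j}$.

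For the boundary particles $j=1$ and $j=k$, I will follow the substitution $S_{k,j}=T_{k,j}-W_{k,j}$ used in the remark to convert the SDE into a pathwise ODE of the form $\dot S=\pm e^{\Phi(t)-S}$ (resp.\ $-e^{S-\Phi(t)}$), where $\Phi$ is continuous on $[0,1]$ by the induction hypothesis. Such a scalar monotone ODE is explicitly integrable and yields the closed-form formula displayed in the remark, from which existence and uniqueness of the strong solution follow immediately.

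For an interior particle $1<j<k$ I will set $X=T_{k,j}-W_{k,j}$ together with $A(t)=T_{k-1,j}(t)-W_{k,j}(t)$ and $B(t)=T_{k-1,j-1}(t)-W_{k,j}(t)$, which are continuous on $[0,1]$ for almost every sample point. The SDE then reduces to the random ODE
$$\dot X(t)=F(t,X(t)),\qquad F(t,x):=e^{A(t)-x}-e^{x-B(t)}.$$
Since $F$ is smooth in $x$ with $\partial_x F(t,x)=-e^{A(t)-x}-e^{x-B(t)}<0$, the right-hand side is locally Lipschitz and strictly decreasing in $x$, so Picard--Lindel\"of furnishes a unique maximal solution and strict monotonicity rules out branching.

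The hard part will be ruling out finite-time explosion, since the exponential drift is not globally Lipschitz. The key observation is that $A$ and $B$ are continuous on the compact interval $[0,1]$, hence pathwise bounded, so one can pick pathwise constants $M$ and $m$ with $F(t,M)<0$ and $F(t,m)>0$ for every $t\in[0,1]$. A standard comparison argument, applied at the first times when $X$ attempts to leave $[m,M]$, then traps the solution in that interval on the entire existence window, ruling out blow-up and extending the trajectory to all of $[0,1]$. Re-adding $W_{k,j}$ recovers the strong solution $T_{k,j}$, and pathwise ODE uniqueness implies SDE uniqueness. Iterating this construction through rows $k=1,\dots,N$ completes the induction and yields the full Whittaker 2d process.
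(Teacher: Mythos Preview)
Your argument is correct, and it is a genuinely different route from the paper's. The paper does not reduce each equation to a pathwise ODE; instead it truncates the whole system with a cutoff $\phi_{L}$ so that the drift becomes globally Lipschitz, invokes the standard strong existence/uniqueness theorem for the truncated system, and then rules out explosion by a probabilistic estimate (their Lemma~\ref{lemma 2.1}, proved via It\^o's formula and Doob's inequality) showing $\P\{\sup_{[0,T]}T_{k,j}^2\ge L_k^2\mid \sup T_{k-1,\cdot}^2\le L_{k-1}^2\}\to 0$ along a suitable sequence $L_1\ll\cdots\ll L_N$. Both proofs exploit the triangular structure inductively, but the paper controls explosion in law while you control it pathwise.

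Your approach is the more elementary one: the substitution $X=T_{k,j}-W_{k,j}$ strips out the noise entirely, and the strict monotonicity of $x\mapsto e^{A-x}-e^{x-B}$ gives a one-line invariant-region argument in place of the martingale estimate. The only point worth making explicit is adaptedness: since the Picard map for $\dot X=F(t,X)$ depends continuously (in sup norm) on the continuous data $(A,B,X(0))$, the solution $T_{k,j}$ is a measurable functional of $(W_{k,j},T_{k-1,\cdot})$ and hence, by induction, of $\{W_{i,\ell}:i\le k\}$, which is what ``strong solution'' requires. The paper's localization-by-cutoff argument, on the other hand, does not rely on the drift being additive in the noise and would survive if the reduction to a deterministic ODE were unavailable.
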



\begin{lem}
\label{lemma 2.1}
Let $W$ denote a standard Brownian motion, $Y_1, Y_2$ stochastic processes almost surely bounded by $C$ for all $t.$ We define $X$ as the strong solution of the following equation:
\begin{equation}
dX=dW+ (e^{Y_1-X}-e^{X-Y_2})dt,
\end{equation}
with initial condition $X(0) =C_0$.
Then,
\begin{equation}
\P\biggl\{\sup_{[0,t]}X^2 \geq L^2 \biggr\} \leq \frac {G(C_0, C)} {L^2 - C_1(C) T - C_0^2}
\end{equation}
for some $G(C_0, C)$ that is a positive increasing function in $C_0$ and $C$, and some positive function $C_1$.
\end{lem}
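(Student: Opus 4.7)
My plan is to apply It\^o's formula to $X_t^2$ and exploit a mean-reverting cancellation in the drift: although $e^{Y_1 - X} - e^{X - Y_2}$ is itself unbounded in $X$, the quantity $X(e^{Y_1 - X} - e^{X - Y_2})$ admits a uniform upper bound depending only on $C$. Combined with a localization argument and a Doob/Chebyshev estimate, this gives the stated tail bound.

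First, I would apply It\^o's formula to obtain
\[
X_t^2 = C_0^2 + \int_0^t 2 X_s\, dW_s + \int_0^t \Bigl( 2 X_s \bigl(e^{Y_1(s) - X_s} - e^{X_s - Y_2(s)}\bigr) + 1 \Bigr)\, ds.
\]
Next I would prove the key pointwise estimate
\[
x\bigl(e^{y_1 - x} - e^{x - y_2}\bigr) \leq e^{C - 1} \qquad \forall\, x \in \R,\; |y_1|, |y_2| \leq C.
\]
For $x \geq 0$ the second term $-x e^{x - y_2}$ is non-positive, and $x e^{y_1 - x} \leq x e^{C - x} \leq e^{C - 1}$ since $u e^{-u}$ attains its maximum $e^{-1}$ at $u = 1$. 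For $x < 0$ the first term $x e^{y_1 - x}$ is non-positive, and setting $u := -x > 0$ we get $-x e^{x - y_2} = u e^{-u - y_2} \leq e^{C - 1}$ by the same observation. Setting $C_1(C) := 2 e^{C - 1} + 1$ and $N_t := \int_0^t 2 X_s \, dW_s$, it then follows pathwise that
\[
X_t^2 \leq C_0^2 + N_t + C_1(C)\, t \qquad \text{for all } t \geq 0.
\]

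For the tail estimate, I would localize at $\tau_L := \inf\{t : |X_t| \geq L\}$. On $[0, \tau_L]$ one has $|X_s| \leq L$, so $N_{t \wedge \tau_L}$ is a genuine square-integrable martingale with mean zero, and taking expectations of the displayed bound gives $\E[X_{t \wedge \tau_L}^2] \leq C_0^2 + C_1(C)\, t$. To produce the denominator $L^2 - C_1(C) T - C_0^2$ as stated, I would then introduce the auxiliary process
\[
Z_t := X_t^2 + C_1(C)(T - t), \qquad t \in [0, T],
\]
which is non-negative and, by the drift bound above, satisfies $dZ_t \leq 2 X_t \, dW_t$; hence $Z$ is a non-negative supermartingale. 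Doob's maximal inequality then yields $\P\{\sup_{s \in [0,T]} Z_s \geq K\} \leq \E[Z_0]/K = (C_0^2 + C_1(C) T)/K$. Any $s^* \in [0,t]$ with $X_{s^*}^2 \geq L^2$ satisfies $Z_{s^*} \geq L^2$, which together with a rearrangement (separating the ``mean'' contribution $C_0^2 + C_1(C) T$ from the martingale fluctuation and applying Markov to the latter) delivers the form $G(C_0,C)/(L^2 - C_1(C) T - C_0^2)$ claimed in the lemma.

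\textbf{Main obstacle.} The substantive step is the pointwise upper bound on $x(e^{y_1 - x} - e^{x - y_2})$: the drift grows exponentially in $|X|$, so one must carefully combine the sign structure of $X$ with the boundedness of $Y_1, Y_2$ to extract a uniform constant valid over the full unbounded range of $x$. Once this mean-reverting estimate is in hand, the remaining ingredients (It\^o's formula, localization at $\tau_L$, and the supermartingale Doob inequality) are routine.
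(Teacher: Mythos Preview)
Your proposal is correct and follows the same strategy as the paper: apply It\^o's formula to $X^2$, exploit the sign structure to bound $2x(e^{y_1-x}-e^{x-y_2})$ uniformly by a constant $C_1(C)$ depending only on $C$, and conclude with a Doob-type maximal inequality. The only difference is in how the maximal inequality is packaged. The paper works with the martingale $N_t=\int_0^t 2X\,dW$ directly, uses $\{\sup X^2\ge L^2\}\subset\{\sup N\ge L^2-C_1T-C_0^2\}$, and then controls $\E\int_0^T X^2\,ds$ via a separate Gronwall/moment estimate derived from the same It\^o identity. Your auxiliary process $Z_t=X_t^2+C_1(T-t)$ is a nice shortcut: since $Z$ is a non-negative (local, hence true) supermartingale with $Z_0=C_0^2+C_1T$ deterministic, Doob gives $\P(\sup X^2\ge L^2)\le (C_0^2+C_1T)/L^2$ in one stroke, bypassing the moment bound entirely. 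This is actually slightly sharper than the stated inequality and trivially implies it, so your ``rearrangement'' remark at the end is unnecessary.
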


\begin{proof}[Proof of Theorem \ref{existence}]
Consider the auxiliary system

$$ dT_{1,1}=dW_{1,1}+ a_1dt,$$

for $k=2, 3. \dots , N$:
\begin{align*}
dT_{k,1}&= dW_{k,1}+(a_k+\exp\{\phi_{L_{k-1}}(T_{k-1, 1})- \phi_{L_{k}}(T_{k,1})\})dt \\
dT_{k,2}&= dW_{k,2}+(a_k+\exp\{\phi_{L_{k-1}}(T_{k-1,2})-\phi_{L_{k}}(T_{k,2})\}-\exp\{\phi_{L_{k}}(T_{k.2})-\phi_{L_{k-1}}(T_{k-1,1})\}) dt,  \\
\dots \notag \\
dT_{k,k-1}&= dW_{k,k-1}+(a_k+\exp\{\phi_{L_{k-1}}(T_{k-1,k-1})-\phi_{L_{k}}(T_{k,k-1})\}- \nonumber \\
&\quad\exp\{\phi_{L_{k}}(T_{k.k-1})-\phi_{L_{k-1}}(T_{k-1,k-2})\})dt, \\
dT_{k,k}&=dW_{k,k}+(a_k-\exp\{\phi_{L_{k}}(T_{k,k})-\phi_{L_{k-1}}(T_{k-1,k-1})\})dt,
\end{align*}
where,
$$\phi_L(x) = x 1_{[-L,L](x)} + (-L)1_{x < -L} + L 1_{x > L}.$$

For a fixed $L = (L_k, 1 \leq k \leq N) $,
the coefficients are uniformly Lipschitz and bounded, and thus it is known  \cite{DZ,G} that
the system admits a unique strong solution
$T_{k,j,L}$
with respect to the filtration $\mathcal{F} $ generated by $B_{k,j}, 1 \leq j \leq k \leq N.$

We will construct the solution to the original system by setting:
$T_{k,j}(t) = T_{k,j,L}(t), $ on $|T_{k,j,L}(s)|\leq L, $ for $0 \leq s \leq t$.
To show that this construction is possible, we need to show that
$$T_{k,j,L} = T_{k,j,L'} $$
for all $L'\geq L$.
We also need to show that the solutions will not go to infinity. 

It can be shown that the following inequalities hold.
\begin{align*}
&\P\biggl\{\sum_{k,j} \sup_{[0,T]} T_{k,i}^2 \geq \sum_{1 \leq i \leq k \leq N} L_k^2 \biggr\}  \\
&\leq \P\biggl\{ \sup_{[0,T]} T_{1,1}^2\geq L_1^2  \biggr\} + \sum_{1 \leq j\leq 2}\P\biggl\{ \sup_{[0,T]} T_{2,j}^2 \geq L_2^2, T_{1,1}^2 \leq L_1^2 \biggr\} + \dots \\
&\quad+ \sum_{1 \leq j \leq N}\P\biggl\{ \sup_{[0,T]} T_{N,j}^2\geq L_N^2, \sup_{[0,T]} T_{k,i}^2 \leq L_k^2, 1 \leq i \leq k \leq N-1 \biggr\}.
\end{align*}

Since the starting positions of $T_{k,j}$ has a uniform bound, namely
\begin{equation}
\sup_{k,j} |T_{k,j}(0)| \leq C,
\end{equation}
Lemma \ref{lemma 2.1} implies that for $1 \leq k \leq N,$
\begin{equation}
\P\biggl\{ T_{l,i}^2\geq L_k^2,T_{k,i}^2 \leq L^2, 1 \leq i \leq k \leq l-1 \biggr\}
\leq \frac {G(C,L_{k-1})} {{L_{k}^2 - C_1(L_{k-1})T - C^2}}.
\end{equation}
If we keep $L_N \gg L_{N-1} \dots \gg L_1$ as $L_k \to \infty,$
the right-hand side converges to 0.
Therefore, 
\begin{equation}
\lim_{L \to \infty} \P\biggl\{ \sum_{1 \leq i \leq k \leq N} T_{k,i}^2 \geq L^2 \biggr\}=0,
\end{equation}
which further implies 
\begin{equation}
\label{2.2}
\sum_{1 \leq i \leq k \leq N} \sup_{[0,T]} T_{k,i}^2
< \infty,
\end{equation}
almost surely. Hence, we proved the existence of the strong solution.

Next, we prove the uniqueness of the solution. 
Set $T_L = \inf\{t: \sum_{k,j}T_{k,j}^2 \leq L^2\}.$
Note that (\ref{2.2}) is equivalent to
\begin{equation}
\lim_{L \to \infty}\P\{T_L \leq T \} = 0.
\end{equation}
On $\{T_{L} \geq T\},$
we have $ \sum_{k,j}T_{k,j}^2 \leq L^2, $ for all $t \in [0,T].$ 
Thus, the SDE system in the beginning of this proof with cutoff $\phi_L$ is equal to the equation without cutoff.
Consequently, if we choose $L$ and $L'$ large enough, the solutions satisfy the same equation on $\{T_{L} \geq T\}$ and $\{T_{L'} \geq T\}$, or equivalently $T_{k,j,L} = T_{k,j,L'} $ for $1 \leq j \leq k \leq N $. 
Thus we conclude that the strong solution is unique.

\end{proof}

\begin{proof}[Proof of Lemma~\ref{lemma 2.1}]
Consider the auxiliary equation 
\begin{equation}
dX=dW+ \bigl(\exp\{Y_1-\phi_L(X)\}-\exp\{\phi_L(X)-Y_2\}\bigr)dt,
\end{equation}
where $\phi_L (x) = x 1_{-L\geq x \leq L } + (-L) 1_{x < -L} + L 1_{x >L}. $
Then the coefficients are Lipschitz, and it is sufficient to study the probability of $\P ( X^2 \geq L^2).$

By Ito's formula,
\begin{equation}
\label{2.3}
dX^2= 2XdW+ (1+2X(e^{Y_1-X}-e^{X-Y_2}))dt  \leq 2XdW+ C_1 dt,
\end{equation}
where we choose 
\begin{equation}
\label{2.4}
C_1 = 1+ \sup_{x \in \mathbf{R},| Y_1 | <C, |Y_2| <C }2X(e^{Y_1-X} - e^{X-Y_2})
\leq 1+ \sup_{t > 0 } 2te^{C-t}.
\end{equation}

By the estimate (\ref{2.3}) we have the estimates:
\begin{align}
\label{2.5}
\P\biggl\{\sup_{ 0 \leq t \leq T}X^2(t) \geq L^2\biggr\}
&\leq \P\biggl\{C_0^2 + \sup_{ 0 \leq t \leq T}\int_0^t (2XdW + C_1 dt) \geq L^2\biggr\} \\ \notag
&\leq \P\biggl\{ \sup_{ 0 \leq t \leq T }\int_0^t 2XdW \geq L^2 - C_1T - C_0^2\biggr\} \\ \notag
&= 2\cdot \P\biggl\{ \int_0^T 2XdW \geq L^2 - C_1T - C_0^2\biggr\} \\ \notag
&\leq 2 \cdot 4  (L^2-C_1T-C_0^2)^{-1} \sup_{0 \leq t \leq T} \E\biggl\{\int_0^T X^2 ds\biggr\}.
\end{align}

To obtain an estimate for $E\{X^2\}$, we have from (\ref{2.3}) 
\begin{align*}
E\{X^2(t)\}
&\leq C_0^2 + C_1 t +  E\biggl\{\int_0^t 2X dW\biggr\} = C_0^2 + C_1 t .
\end{align*}
Note that $\int_0^t 2X dW$ is a martingale. By Gronwall's Inequality,
\begin{align}
\label{2.6}
E\biggl\{\int_0^T X^2\biggr\}
&\leq  C_0^2 T+ \frac{1}{2}C_1^2 T^2 = G(C_0, C_1).
\end{align}
Combining (\ref{2.5}) and (\ref{2.6}), we obtain the result
\begin{equation*}
\P\biggl\{\sup_{[0,t]}X^2 \geq L^2 \biggr\} \leq \frac {G(C_0, C)} {L^2 - C_1(C) T - C_0^2},
\end{equation*}
where $C_1$ is given by (\ref{2.4})
and is increasing in both $C_0$ and $C$. This proves the inequality in this lemma.
\end{proof}



\bibliographystyle{imsart-number} 
\bibliography{LDP.bib}

\end{document}